\documentclass[a4paper,twoside,11pt]{article}
\usepackage{amsmath,latexsym,amssymb,amsfonts,amsbsy,amsthm,mathrsfs}
\usepackage{algorithm}
\usepackage{algorithmic}
\usepackage{cite}
\usepackage{array}
\usepackage{cleveref}
\usepackage{float}

\usepackage{indentfirst}
\usepackage[labelfont=bf, labelsep=period, figurename=Fig.]{caption}

\usepackage{graphicx}
\usepackage{subfigure}
\usepackage[top=1in, bottom=1in, left=1.25in, right=1.25in]{geometry}
\usepackage{epstopdf}
\usepackage{diagbox}
\usepackage{enumerate}
\usepackage{color}
\newfont{\bb}{msbm10}

\newtheorem{theorem}{Theorem}[section]
\newtheorem{definition}{Definition}[section]
\newtheorem{lemma}{Lemma}[section]

\theoremstyle{definition}
\newtheorem{example}{Example}[section]

\numberwithin{equation}{section}

 \newtheorem{prop}{Proposition}[section]

\title{Residual-based Kaczmarz methods for tensor linear equations with t-product}

\author{
Li-Lin Ji\\
School of Mathematical Sciences, Tongji University, \\
Shanghai 200092, China \\
Email: 2251237@tongji.edu.cn,\\
Juanjuan Sun \\
Key Laboratory of Intelligent Computing and 
Applications (Tongji University), 
 \\Ministry of Education,
School of Mathematical Sciences, Tongji University, \\
Shanghai 200092, China \\
Email: sunjuan@tongji.edu.cn,\\
Jun-Feng Yin\\
Key Laboratory of Intelligent Computing and 
Applications (Tongji University),
\\Ministry of Education,
School of Mathematical Sciences, Tongji University, \\
Shanghai 200092, China \\
Email:yinjf@tongji.edu.cn\\
}

\begin{document}
\cleardoublepage \pagestyle{myheadings}

\maketitle
\date{}

\markboth{\small L.-L. Ji, J.-J. Sun and J.-F. Yin}
{\small  Residual-based Kaczmarz methods for tensor linear equations with t-product}

\begin{abstract}
Tensor linear systems widely arise from high-dimensional data mining and computing, for instance, natural language processing and machine learning. A class of residual-based tensor Kaczmarz method is proposed for tensor linear equations with t-product. Theoretical analyses prove the convergence and give an upper bound of the convergence rate of the proposed method. Furthermore, an accelerated residual-based Kaczmarz method with heavy ball momentum is developed. Numerical experiments verify the efficiency of the proposed methods and demonstrate that they are faster than the existing tensor Kaczmarz methods.

\end{abstract}

\noindent{\bf Keywords.}\ Tensor linear systems, T-product, Kaczmarz methods, Heavy ball momentum,
 Convergence.

\bigskip

\section{Introduction}
\label{sec:intro}

Consider the solution of consistent tensor linear systems
\begin{equation}
\mathcal{A} * \mathcal{X} = \mathcal{B},
\end{equation}
where \(\mathcal{A} \in \mathbb{C}^{n_1 \times n_2 \times n_3}\),
\(\mathcal{X} \in \mathbb{C}^{n_2 \times \ell \times n_3}\),
\(\mathcal{B} \in \mathbb{C}^{n_1 \times \ell \times n_3}\)
and
the operator $*$ denotes the t-product introduced by Kilmer and Martin \cite{2011KM},  which arises widely from high-dimensional data mining and computing, such as image reconstruction \cite{2021CQ,2013KBHH}, video analysis \cite{2024LLY} and medical imaging \cite{ke2026randomized}.

Kaczmarz method is a fundamental iteration method for consistent linear equations and image reconstruction \cite{2021CQ}. In recent times, randomized Kaczmarz methods are proposed and effectively applied to online computation \cite{2014NSW} and learning theory \cite{Chen2018}. A number of variants of Kaczmarz methods have been proposed and studied in the past decades, including block Kaczmarz method \cite{2024XY,2025XGY}, greedy Kaczmarz method \cite{2026LYWY}, extended Kaczmarz method \cite{2025WYYZ}, sparse Kaczmarz method \cite{2025WYZ,2025WYYZ}, Bregman Kaczmarz method \cite{2026YYJW,2026YW} and surrogate hyperplane Kaczmarz method \cite{2024DWYY,2025DQY}.

Recently, randomized Kaczmarz method was firstly proposed to solve consistent tensor linear systems with t-product in \cite{2022MM}, and the linear convergence rate in expectation was established. 
%To avoid the computation of the pseudoinverse in solving consistent tensor systems, the randomized average Kaczmarz method with t-product was developed in \cite{2022BZLWG}.
Inspired by the idea of sketch-and-project to tensor equations, Tang, Zhang and Li proposed adaptive variants and their Fourier-domain versions repectively \cite{2022TZL,2026TZL}. Moreover, variants of tensor Kaczmarz were presented for inconsistent tensor systems \cite{2024HZ,2025ALJL}, factorized tensor systems \cite{2025CHHH} and tensor recovery \cite{2021CQ,2024ZGP}, as well as the accelerated techniques, including greedy residual \cite{2024ZH,2024WLRL} and 
heavy ball momentum \cite{2024LLY}.

%On the other hand, many regularized Kaczmarz method are studied for tensor completion.
%Chen and Qin proposed a randomized regularized Kaczmarz algorithm for tensor recovery,
%which obviously improve the robustness and stability of the solution when the tensor system is under-determined or ill-posed.
%Further, Du and Sun proposed a tensor randomized extended Kaczmarz algorithm for the inconsistent tensor recovery.
%Zhang et al. established a new regularized Kaczmarz method
%by a sampling-based greedy strategy and greatly enhanced the efficiency of tensor recovery.

In this paper, a class of residual-based Kaczmarz method is proposed for tensor linear systems. Theoretical analyses prove the convergence of the proposed method, which has better convergence than the existing tensor Kaczmarz methods.
%The implementation of Fourier-transformed variant are given to
%further enhance the computational efficiency by FFT.
Furthermore, a heavy ball momentum-accelerated residual-based tensor Kaczmarz method is developed. Numerical examples are given to verify the efficiency of the proposed approaches
and demonstrate that they are faster than the existing tensor Kaczmarz methods.

This paper is organized as follows. By briefly reviewing the definition of t-product in Section \ref{sec2}, the two residual-based tensor Kaczmarz methods are established and their convergence are studied in Section \ref{sec3}. In Section \ref{sec4}, numerical experiments are presented to illustrate that the proposed methods are efficient. Finally, conclusions are drawn in Section \ref{sec5}.

\section{Preliminaries of Tensor} \label{sec2}

In this section, some fundamental definitions and facts in tensor algebra are reviewed, which were proposed in \cite{2011KM} and analyzed in detail in \cite{2013KBHH}.

Denote tensors, matrices and scalars by calligraphic capital letters, capital letters and lowercase letters. For a matrix $A$, we use $A^T$, $A^H$, $A^\dag$, $\|A\|_F$, $\|A\|_2$, and $\sigma_{min}(A)$ to denote its transpose, the conjugate transpose, the Moore-Penrose pseudoinverse, the Frobenius norm, the Euclidean norm and the minimum nonzero singular values of $A$, respectively.
For a third-order tensor
$\mathcal{A} $, we use the notation $\mathcal{A}_{i,:,:}$, $\mathcal{A}_{:,j,:}$, $\mathcal{A}_{:,:,k}$ to represent the $i$-th horizontal, $j$-th lateral,
and $k$-th frontal slices of A.
To condense notation,  $\mathcal{A}_k$ represents the $kth$ frontal slice of $\mathcal{A}$.

The definition of tensor-tensor t-product, denoted by t-product, is given as follows.
\begin{definition}[t-product \cite{2011KM}]
Let \(\mathcal{A} \in \mathbb{C}^{n_1 \times n_2 \times n_3}\)
and \(\mathcal{X} \in \mathbb{C}^{n_2 \times \ell \times n_3}\) represent two third-order tensors. The tensor-tensor t-product  of \( \mathcal{A} \) and \( \mathcal{X} \) is defined as follows:
\[
 \mathcal{A} * \mathcal{X} = \text{\rm fold}\left(\text{\rm bcirc}(\mathcal{A}) \cdot \text{\rm unfold}(\mathcal{X})\right),
\]
where
\[
\text{\rm bcirc}(\mathcal
{A}) =
\begin
{bmatrix}
A_{1} & A_{ n_3} &
\cdots & A_{ 2} \\
A_{2} & A_{1} &
\cdots & A_{3} \\
%A_{3} & A_{ 2} &
%\cdots & A_{ 4} \\
\vdots & \vdots & \ddots & \vdots \\
A_{n_3} & A_{n_3-1}  &
\cdots
 & A_{1}
\end
{bmatrix}
\in \mathbb{C}^{n_1 n_3 \times
 n_2n_3},
\]
and
\[
\text{\rm unfold}(\mathcal
{X}) =
\begin
{bmatrix}
\mathcal{X}_{ 1} \\
\mathcal{X}_{ 2} \\
\vdots \\
\mathcal
{X}_{n_3}
\end
{bmatrix}
\in \mathbb{C}^{n_2n_3 \times \ell}, \quad \text{\rm fold}(\text{\rm unfold}(\mathcal{X})) = \mathcal
{X}.
\]

\end{definition}

By the definition of the t-product, it holds that
$$
(\mathcal{A} * \mathcal{X} )_{i,:,:}=
\mathcal{A}_{i,:,:} * \mathcal{X},
$$
and
$$
\mathcal{A} * \mathcal{X} =
\sum_{j=1}^{n_2} \mathcal{A}_{:,j,:} * \mathcal{X}_{j,:,:}.
$$

Some classical linear algebra properties under the t-product, including transpose, orthogonality, inverse and range, 
are studied in detail in \cite{2011KM,2013KBHH}, which are briefly reviewed below.

% -------------------------------------------------

\begin{definition}[Conjugate Transpose]
The transpose of a tensor \( \mathcal{A} \in \mathbb{C}^{n_1 \times n_2 \times n_3} \), denoted by \( \mathcal{A}^H \in \mathbb{C}^{n_2 \times n_1 \times n_3} \), is defined as follows: each frontal slice of \( \mathcal{A} \) is conjugate transposed, and the order of the frontal slices is reversed from the second to the $ n $-th, that is to say
\[
(\mathcal{A}^H)_{:,:,1} = (\mathcal{A}_{:,:,1})^H, \quad (\mathcal{A}^H)_{:,:,2:n} = (\mathcal{A}_{:,:,n:2})^H.
\]
\end{definition}

For \(\mathcal{A} \in \mathbb{C}^{n_1 \times n_2 \times n_3}\)
and \(\mathcal{B} \in \mathbb{C}^{n_2 \times \ell \times n_3}\),
it follows that
$$
\text{\rm bcirc}(\mathcal
{A}^H) =\text{\rm bcirc}(\mathcal{A})^H,
$$
and
$$
(\mathcal{A} * \mathcal{B})^H
= \mathcal{B}^H * \mathcal{A}^H.
$$

% -------------------------------------------------

\begin{definition}[Identity tensor]
The identity tensor
\( \mathcal{I} \in \mathbb{C}^{n_1 \times n_1 \times n_3} \) is defined as a tensor where the first frontal slice is the \( n_1 \times n_1 \) identity matrix and all subsequent frontal slices are zero matrices, i.e.,
\[
\mathcal{I}_{:, :, 1} = I_{n_1}, \quad \mathcal
{I}_{:, :, 2:n} = O,
\]
where
\( I_{n_1} \) denotes the \( n_1 \times n_1 \)
 identity matrix, so that the block circulant representation of the identity tensor is
$
\text{\rm bcirc}(\mathcal
{I}) = I_{n_1n_3}.
$
\end{definition}
% -------------------------------------------------

\begin{definition}[Inverse]
A tensor \( \mathcal{A} \in \mathbb{C}^{n_1 \times n_1 \times n_2} \) is invertible if there exists a tensor \( \mathcal{B} \in \mathbb{C}^{n_1 \times n_1 \times n_2} \) satisfying 
\[
\mathcal{A} * \mathcal{B} = \mathcal{B} * \mathcal{A} = \mathcal{I},
\]
where \( \mathcal{I} \) is the identity tensor. Then, \( \mathcal{B} \) is called the inverse of \( \mathcal{A} \) under the t-product, and is denoted by \( \mathcal{A}^{-1} \).

\end{definition}

% -------------------------------------------------

\begin{definition}[Moore-Penrose inverse]
Let \( \mathcal{A} \in \mathbb{C}^{n_1 \times n_2 \times n_3} \). If there exists a unique tensor \( \mathcal{X} \in \mathbb{C}^{n_2 \times n_1 \times n_3} \) such that
\begin{equation*}
\mathcal{A} * \mathcal{X} * \mathcal{A} = \mathcal{A}, \quad
\mathcal{X} * \mathcal{A} * \mathcal{X} = \mathcal{X}, \quad
(\mathcal{A} * \mathcal{X})^H = \mathcal{A} * \mathcal{X}, \quad
(\mathcal{X} * \mathcal{A})^H = \mathcal{X} * \mathcal{A},
\end{equation*}
then \( \mathcal{X} \) is called the Moore-Penrose inverse of \( \mathcal{A} \) under the t-product, denoted by \( \mathcal{A}^\dagger \).

\end{definition}

% -------------------------------------------------

\begin{definition}[Orthogonal tensor]
A tensor \( \mathcal{P} \in \mathbb{C}^{n_1 \times n_2 \times n_3}\) is called orthogonal if it satisfies the following conditions:
\[
\mathcal{P} * \mathcal{P}^H = \mathcal{P}^H * \mathcal{P} = \mathcal{I},
\]
where \( \mathcal{I} \) is the identity tensor.
\end{definition}

% -------------------------------------------------

\begin{definition}[inner product]
The inner product between $\mathcal{A}$ and
\(\mathcal{B} \in \mathbb{C}^{n_1 \times n_2 \times n_3}\)
is defined as
\[
\langle \mathcal{A}, \mathcal{B}\rangle
=\Sigma_{i,j,k} \mathcal{A}_{i,j,k} \overline{\mathcal{B}_{i,j,k}} ,
\]
where \( \overline{\mathcal{B}_{i,j,k}}  \)  is the conjugate  of \( \mathcal{B}_{i,j,k} \).
\end{definition}

For \(\mathcal{A}\in\mathbb{C}^{n_1\times n_2\times n_3}\), \(\mathcal{B}\in\mathbb{C}^{n_2\times \ell \times n_3}\) and \(\mathcal{C}\in\mathbb{C}^{n_1\times \ell \times n_3}\), it holds that
\[\langle\mathcal{A}*\mathcal{B},\mathcal{C}\rangle=\langle\mathcal{B},\mathcal{A}^{T}*\mathcal{C}\rangle.\]

% -------------------------------------------------

\begin{definition}[spectral norm and Frobenius norm]
The spectral norm and Frobenius norm of
\(\mathcal{A} \in \mathbb{C}^{n_1 \times n_2 \times n_3}\)
are defined as
\begin{eqnarray*}
\|\mathcal{A}\|_{2} &=& \|\operatorname{bcirc}(\mathcal{A})\|_{2},\\
\|\mathcal{A}\|_{F} &=& \sqrt{\langle\mathcal{A},\mathcal{A}\rangle}
= \sqrt{\sum_{ijk}(\mathcal{A}_{ijk})^{2}},
\end{eqnarray*}denoted by 2-norm and F-norm respectively.
\end{definition}

For \(\mathcal{A}\in\mathbb{C}^{n_1\times n_2\times n_3}\) and  \(\mathcal{B}\in\mathbb{C}^{n_2\times  \ell \times n_3}\),
it holds that
%$$
%\|\mathcal{A}*\mathcal{B}\|_{F}\l%eq %\|\mathcal{A}\|_{2}\|\mathcal{B}%\$|_{F},
%$$
$$
\sigma_{\min}^2 (\text{bcirc}(\mathcal{A}))
\|\mathcal{B}\|_{F} \leq
\|\mathcal{A}*\mathcal{B}\|_{F}\leq
\sigma_{\max}^2 (\text{bcirc}(\mathcal{A}))
\|\mathcal{B}\|_{F}.
$$
% -------------------------------------------------

\begin{prop}
Let \(\mathcal{A}\in\mathbb{C}^{n_1\times n_2\times n_3}\), \(\mathcal{X}\in\mathbb{C}^{n_2\times \ell \times n_3}\) and \(\mathcal{B}\in\mathbb{C}^{n_1\times \ell \times n_3}\). 
Then, the minimum Frobenius norm solution of a consistent system $\mathcal{A}*\mathcal{X} = \mathcal{B}$
is unique, and is $\mathcal{X}=\mathcal{A}^\dag* \mathcal{B}$.
\end{prop}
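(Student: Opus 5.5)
The plan is to transfer the statement to the matrix setting through the block circulant representation, and then use the classical matrix result. By the definition of the t-product, $\mathcal{A}*\mathcal{X}=\mathcal{B}$ is equivalent to
\[
\operatorname{bcirc}(\mathcal{A})\,\operatorname{unfold}(\mathcal{X})=\operatorname{unfold}(\mathcal{B}).
\]
Moreover, $\|\mathcal{X}\|_F=\|\operatorname{unfold}(\mathcal{X})\|_F$, since unfold only rearranges entries. So the tensor problem is exactly the matrix problem of finding the minimum Frobenius norm solution of $AX=B$, with
\[
A=\operatorname{bcirc}(\mathcal{A}),\qquad X=\operatorname{unfold}(\mathcal{X}),\qquad B=\operatorname{unfold}(\mathcal{B}).
\]

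\emph{Existence and uniqueness.} I would argue column by column, or equivalently use the Pythagorean decomposition. Any solution is $X=X_0+N$ with the columns of $N$ in $\mathcal{N}(A)$, and we may take $X_0=A^\dagger B$, which has columns in $\mathcal{R}(A^H)=\mathcal{N}(A)^\perp$. Then
\[
\|X\|_F^2=\|A^\dagger B\|_F^2+\|N\|_F^2 .
\]
Hence the minimizer is unique and equals $A^\dagger B$. Consistency guarantees that $AA^\dagger B=B$.

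\emph{Identifying $A^\dagger B$ with $\mathcal{A}^\dagger*\mathcal{B}$.} This is the step needing care. I would show that $\operatorname{bcirc}(\mathcal{A}^\dagger)=\operatorname{bcirc}(\mathcal{A})^\dagger$. The map $\operatorname{bcirc}$ is linear and multiplicative, $\operatorname{bcirc}(\mathcal{A}*\mathcal{C})=\operatorname{bcirc}(\mathcal{A})\operatorname{bcirc}(\mathcal{C})$, which follows from the t-product definition because products of block circulants are block circulant. It also commutes with $H$, as stated earlier in the paper. Applying bcirc to the four Penrose equations for $\mathcal{A}^\dagger$ therefore gives the four matrix Penrose equations for $\operatorname{bcirc}(\mathcal{A}^\dagger)$. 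Uniqueness of the matrix Moore--Penrose inverse then yields the identity.

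One may worry whether $\mathcal{A}^\dagger$ exists at all. To handle this, I would construct it via the DFT block diagonalization $(F\otimes I)\operatorname{bcirc}(\mathcal{A})(F^H\otimes I)=\operatorname{diag}(\hat A_1,\dots,\hat A_{n_3})$. Define the tensor whose Fourier-domain slices are $\hat A_k^\dagger$. The resulting matrix is block circulant, so it is $\operatorname{bcirc}$ of some tensor, and that tensor satisfies the Penrose equations. Its uniqueness again follows from the matrix case.

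With this identity in hand,
\[
\operatorname{unfold}(\mathcal{A}^\dagger*\mathcal{B})=\operatorname{bcirc}(\mathcal{A}^\dagger)\operatorname{unfold}(\mathcal{B})=A^\dagger B,
\]
and folding back completes the argument. The main obstacle is the identity $\operatorname{bcirc}(\mathcal{A}^\dagger)=\operatorname{bcirc}(\mathcal{A})^\dagger$, in particular confirming that the matrix pseudoinverse of a block circulant matrix is again block circulant; the Fourier diagonalization is what settles this.
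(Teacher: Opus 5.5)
The paper gives no proof of this proposition. It appears in the preliminaries as a background fact from the t-product literature, and the paper's Moore--Penrose definition even presupposes existence and uniqueness (``if there exists a unique tensor \ldots''). So there is no in-paper argument to compare with. Your proof is correct and supplies exactly what the paper takes for granted.

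The individual steps are all sound:
\begin{itemize}
\item the reduction through $\operatorname{unfold}$, which is a norm-preserving bijection onto $\mathbb{C}^{n_2n_3\times \ell}$;
\item the Pythagorean splitting $\|X\|_F^2=\|A^\dagger B\|_F^2+\|N\|_F^2$, using $\mathcal{R}(A^\dagger)=\mathcal{R}(A^H)=\mathcal{N}(A)^\perp$;
\item the transfer of the four Penrose equations via $\operatorname{bcirc}(\mathcal{A}*\mathcal{C})=\operatorname{bcirc}(\mathcal{A})\operatorname{bcirc}(\mathcal{C})$ and $\operatorname{bcirc}(\mathcal{A}^H)=\operatorname{bcirc}(\mathcal{A})^H$.
\end{itemize}
In the existence direction, where you pass from the matrix Penrose equations back to the tensor ones, you also need $\operatorname{bcirc}$ to be injective. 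This is immediate, since its first block column is $\operatorname{unfold}(\mathcal{A})$.

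Your DFT step works. The identities should be $I_{n_1}$ on one side and $I_{n_2}$ on the other. You also need that $(F^H\otimes I_{n_2})D(F\otimes I_{n_1})$ is block circulant for every block-diagonal $D$, which follows from an inverse DFT on the blocks.

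There is a cheaper way to see that $\operatorname{bcirc}(\mathcal{A})^\dagger$ is block circulant. Let $P$ be the $n_3\times n_3$ cyclic shift. A matrix $M$ with $n_1\times n_2$ blocks is block circulant iff $(P\otimes I_{n_1})M(P\otimes I_{n_2})^T=M$. The identity $(UMV^H)^\dagger=VM^\dagger U^H$ for unitary $U,V$ then transfers this invariance to $\operatorname{bcirc}(\mathcal{A})^\dagger$, with the roles of $n_1$ and $n_2$ swapped.

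Either way, the pseudoinverse of a real matrix is real. Hence $\mathcal{A}^\dagger$ and the minimum-norm solution are real when $\mathcal{A}$ and $\mathcal{B}$ are, which is the setting the paper actually works in.
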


In this paper, the cases within the field of real numbers are studied.

\section{Residual-based tensor Kaczmarz methods} \label{sec3}

Due to the t-product of a tensor is separable in the first order, the idea of Kaczmarz methods is naturally extended to solve the consistent tensor equation by seeking the approximate solution to minimize $ \|\mathcal{X}-\mathcal{X}_k\|_F$ with the constraint
$$
\mathcal{A}_{i_k,:,:} * \mathcal{X} = \mathcal{B}_{i_k,:,:},
$$
where the index $i_k$ could be chosen cyclically or randomly.

By randomly selecting index $i_k$ according to 
$
p(i) = \frac{\|\mathcal{A}_{i,:,:}\|_F }{ \|\mathcal{A}\|_F}
(i=1,2,\ldots,n_1),
$ the tensor Kaczmarz iteration formula is reformulated by
\begin{equation}\label{eqn:trk1}
\mathcal{X}_{k+1} = \mathcal{X}_k - \mathcal{A}_{i_k,:,:}^\dagger * (\mathcal{A}_{i_k,:,:} * \mathcal{X}_k - \mathcal{B}_{i_k,:,:}),
\end{equation}
or equivalently
\begin{equation}\label{eqn:trk2}
\mathcal{X}_{k+1} = \mathcal{X}_k -
\mathcal{A}_{i_k,:,:}^T * (\mathcal{A}_{i_k,:,:}*\mathcal{A}_{i_k,:,:}^T)^\dagger * (\mathcal{A}_{i_k,:,:} * \mathcal{X}_k - \mathcal{B}_{i_k,:,:}),
\end{equation}
which corresponds to the randomized tensor Kaczmarz method \cite{2022MM}.

Moreover, by selecting the index $i_k$ according to $$
i_k = {\arg \max}_{i \in [m]} \|\mathcal{A}_{i,:,:} * \mathcal{X} - \mathcal{B}_{i,:,:}\|_F,
$$where $\arg\max$ denotes the index $i \in [m]$ at which 
$\left\| \mathcal{A}_{i,:,:} * \mathcal{X} - \mathcal{B}_{i,:,:} \right\|_F$
attains its maximum value, the tensor Kaczmarz iteration formula is reformulated by the randomized tensor Kaczmarz-Motzkin method \cite{2025NYL}.

%The maximum residual Kaczmarz methods, was proposed in
%where the index $i_k$ corresponds to the largest residual,
%determined as

%\begin{theorem}
%Let \(\mathcal{X}_*\) denote the minimal Frobenius norm solution of \(\mathcal{A} * \mathcal{X} = \mathcal{B}\), and let \(\mathcal{X}_k\) be generated by the iterative scheme \eqref{eqn:trk} with the initial value \(\mathcal{X}_0 = 0\) and a stepsize \(\alpha \in (0, 2/\beta)\), where
%\[
%\beta = \max_{i \in [m]} \frac{\sigma_{\text{max}}^2 (\text{bcirc}(\mathcal{A}_{i,:,:}))}{\|\mathcal{A}_{i,:,:}\|_F^2}.
%\]
%At each iteration, the index \(i_k \in [m]\) is randomly selected according to the probability distribution
%\[
%\mathbb{P}(i_k = i) = \frac{\|\mathcal{A}_{i,:,:}\|_F^2}{\|\mathcal{A}\|_F^2} \quad \text{for } i = 1, \ldots, m.
%\]
%In expectation, the error at the \(k\)th iteration satisfies
%\[
%\mathbb{E} \left[ \|\mathcal{X}_k - \mathcal{X}_*\|_F^2 \right] \leq \left(1 - \alpha (2 - \alpha \beta) \frac{\sigma_{\text{min}}^2 (\text{bcirc}(\mathcal{A}))}{\|\mathcal{A}\|_F^2} \right)^k \|\mathcal{X}_0 - \mathcal{X}_*\|_F^2.
%\]
%\end{theorem}

%To avoid the computation of the  Moore-Penrose inverse of \(\mathcal{A}_{i_k,:,:}\) or the inverse of \(\mathcal{A}_{i_k,:,:} * \mathcal{A}_{i_k,:,:}^T\), Bao etc proposed a tensor randomized average block Kaczmarz method
%where the iteration formula is reformulated as:
%\begin{equation}\label{eqn:trk}
%\mathcal{X}_{k+1} = \mathcal{X}_k - \mathcal{A}_{i_k,:,:}^\dagger * (\mathcal{A}_{i_k,:,:} * \mathcal{X}_k - \mathcal{B}_{i_k,:,:}).
%\end{equation}

To avoid the computation of the  Moore-Penrose inverse of \(\mathcal{A}_{i_k,:,:}\) or \(\mathcal{A}_{i_k,:,:} * \mathcal{A}_{i_k,:,:}^T\),
 a  regularized Kaczmarz iteration for tensor recovery is given in \cite{2021CQ} as
\[
\mathcal{X}_{k+1} = \mathcal{X}_k - \frac{\alpha}{\|\mathcal{A}_{i_k,:,:}\|_F^2} \mathcal{A}_{i_k,:,:}^T * (\mathcal{A}_{i_k,:,:} * \mathcal{X}_k - \mathcal{B}_{i_k,:,:}).
\]

%Denote the tensor residual $\eta_k= \mathcal{B} - \mathcal{A} \mathcal{X}_k$,
Given a tensor $\eta_k \in\mathbb{R}^{n_1\times \ell \times n_3}$
and choosing the composite direction $\mathcal{A}^T * \eta_k$,
the new iterate scheme is defined by
\begin{equation}\label{eqn:RATK1}
\mathcal{X}_{k+1} = \mathcal{X}_k + \lambda \mathcal{A}^T * \eta_k,
\end{equation}
satisfying
\begin{equation}\label{eqn:innerproductconstraint}
 \langle \eta_k, \mathcal{A} * \mathcal{X}_{k+1} \rangle
 = \langle \eta_k, \mathcal{B} \rangle.
\end{equation}

By substituting \eqref{eqn:RATK1} into \eqref{eqn:innerproductconstraint}, it is obtained that
\begin{equation*}
 \langle \eta_k, \mathcal{A} * (\mathcal{X}_k + \lambda \mathcal{A}^T * \eta_k) \rangle
 = \langle \eta_k, \mathcal{B} \rangle,
\end{equation*}
or
  $$
 \langle \eta_k, \mathcal{A} * \mathcal{X}_k \rangle+
  \lambda \langle \eta_k, \mathcal{A} *  \mathcal{A}^T * \eta_k) \rangle
 = \langle \eta_k, \mathcal{B} \rangle,
 $$
 which gives
 $$
 \lambda = \frac{\langle \eta_k, \mathcal{B}-\mathcal{A} * \mathcal{X}_k \rangle}{\langle \mathcal{A}^T * \eta_k,   \mathcal{A}^T * \eta_k \rangle}.
  %= \frac{ \langle \eta_k, \eta_k \rangle }{\|\mathcal{A}^T * \eta_k\|_F^2}.
 $$

It is obvious that different tensor $\eta_k$ leads to different tensor Kaczmarz iteration. 
% If $\eta_k=I$, it leads to  tensor sampling Kaczmarz method.
By adaptively choosing the residual tensor as the weight
$\eta_k= \mathcal{B} - \mathcal{A}*
\mathcal{X}_k$,
the residual-based Kaczmarz method for solving tensor systems
iterates by
\begin{equation}\label{eqn:RTK}
\mathcal{X}_{k+1} = \mathcal{X}_k + \frac{ \langle \eta_k, \eta_k \rangle }{\|\mathcal{A}^T * \eta_k\|_F^2} \mathcal{A}^T * \eta_k,
\end{equation}
which is described in detail below.

\begin{algorithm}[!htbp]
	\centering
	\caption{Residual-based Kaczmarz method for solving tensor systems}\label{alg:RTK}
	\begin{algorithmic}[1]
	\STATE{\textbf{Input}: $\mathcal{A} \in \mathbb{R}^{n_1 \times n_2 \times n_3}$, $\mathcal{B}\in \mathbb{R}^{n_1 \times \ell \times n_3} $, $\mathcal{X}_{0}=0\in \mathbb{R}^{n_2 \times \ell \times n_3}$}.
		\STATE{\textbf{Output}: $\mathcal{X}_{k+1}$}.
		\FOR{$k=0,1,2,\ldots$}
		\STATE Compute:
		 $\eta_k= \mathcal{B} - \mathcal{A} *\mathcal{X}_k$,
%        \STATE Compute:
%		 $\mathcal{A}^T * \eta_k$.
		\STATE Update:
		 $\mathcal{X}_{k+1} = \mathcal{X}_k + \frac{ \langle \eta_k, \eta_k \rangle }{\|\mathcal{A}^T * \eta_k\|_F^2} \mathcal{A}^T * \eta_k$.
		\ENDFOR
	   \end{algorithmic}
    \end{algorithm}

When $n_3=1$, the residual-based tensor Kaczmarz method covers the surrogate hyperplane Kaczmarz method for solving consistent linear equations \cite{2023WY,2024DWYY}.

Before analyzing the convergence performance of the residual-based Kaczmarz method, a lemma on Kantorovich inequality is given as follows.

\begin{lemma}
Given a linear and self-adjoint operator \( B \) of the Hilbert space \( \mathcal{H} \). If the real numbers \( m, M \) and the operator \( B \) fulfill the condition
\[
0 < mE \leq B \leq ME,
\]
where \( E \) is the identity operator in \( \mathcal{H} \), then for all \( x \in \mathcal{H} \),
\[
x^T B x \, x^T B^{-1} x \leq \frac{(M + m)^2}{4mM} (x^T x)^2.
\]
\end{lemma}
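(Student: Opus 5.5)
The plan is to reduce the Kantorovich inequality to an elementary quadratic inequality in the spectrum of $B$. Replace $x^TBx$ and $x^TB^{-1}x$ by the inner products $\langle Bx,x\rangle$ and $\langle B^{-1}x,x\rangle$ of $\mathcal{H}$; this is the meaning of the transpose notation in the statement. The hypothesis $0<mE\le B\le ME$ does two things: it makes $B$ invertible with bounded inverse, and it gives $\frac{1}{M}E\le B^{-1}\le \frac{1}{m}E$. The key observation is that the operator
\[
C=(ME-B)(B-mE)B^{-1}
\]
is positive semidefinite. Indeed, $ME-B$, $B-mE$ and $B^{-1}$ are all functions of the single self-adjoint operator $B$, so they commute. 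Each is positive semidefinite, and a product of commuting positive semidefinite operators is positive semidefinite. To see this, use the continuous functional calculus: $C=f(B)$ with $f(t)=(M-t)(t-m)/t\ge 0$ on $[m,M]\supseteq\sigma(B)$.

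Next, expand $C$. Since $(M-B)(B-m)B^{-1}=(M+m)E-B-mMB^{-1}$, positivity of $C$ gives, for every $x$,
\[
\langle Bx,x\rangle+mM\langle B^{-1}x,x\rangle\le (M+m)\langle x,x\rangle .
\]

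Then apply AM--GM to the two nonnegative terms on the left:
\[
2\sqrt{mM\,\langle Bx,x\rangle\langle B^{-1}x,x\rangle}\le \langle Bx,x\rangle+mM\langle B^{-1}x,x\rangle\le (M+m)\langle x,x\rangle .
\]
Squaring and dividing by $4mM$ gives the claimed bound
\[
\langle Bx,x\rangle\langle B^{-1}x,x\rangle\le \frac{(M+m)^2}{4mM}\langle x,x\rangle^2 .
\]

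The only step that needs care is the positivity of $C$. If one wants to avoid the functional calculus, there is a direct alternative. Set $y=B^{-1/2}x$, where the square root exists because $B\ge mE>0$. Then
\[
\langle Cx,x\rangle=\langle (ME-B)(B-mE)y,y\rangle=\|(ME-B)^{1/2}(B-mE)^{1/2}y\|^2\ge 0,
\]
using that commuting positive operators have commuting square roots. In the finite-dimensional setting actually used in the paper, where $B$ will be a symmetric positive definite matrix such as $\text{bcirc}(\mathcal{A})\,\text{bcirc}(\mathcal{A})^T$ restricted to its range, this step is immediate. Diagonalize $B=Q\Lambda Q^T$ with eigenvalues $\lambda_i\in[m,M]$; then $(M-\lambda_i)(\lambda_i-m)/\lambda_i\ge 0$ for each $i$, so $C$ is positive semidefinite. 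I would present the spectral version and remark that the general Hilbert space case follows the same way.
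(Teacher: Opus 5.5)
The paper gives no proof of this lemma. It quotes it as the classical Kantorovich inequality and uses it as a black box in the proof of Theorem~\ref{thm:RTK}, so there is no argument in the paper to compare with; your proposal supplies one, and it is correct.

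Your argument is the standard operator-theoretic proof, and each step holds:
\begin{itemize}
\item The identity $(ME-B)(B-mE)B^{-1}=(M+m)E-B-mMB^{-1}$ is right.
\item Positivity of this operator follows either from $f(t)=(M-t)(t-m)/t\ge 0$ on $[m,M]\supseteq\sigma(B)$, or from your square-root computation with $y=B^{-1/2}x$.
\item AM--GM applied to the nonnegative quantities $\langle Bx,x\rangle$ and $mM\langle B^{-1}x,x\rangle$, followed by squaring, gives exactly the stated constant.
\item The hypothesis $m>0$ is what makes $B^{-1}$ bounded and the division by $4mM$ legitimate.
\end{itemize}
Compared with the bare citation, your route is self-contained and valid for bounded self-adjoint operators on any Hilbert space. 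In the finite-dimensional real setting the paper needs, it reduces to the scalar fact $(M-\lambda_i)(\lambda_i-m)\ge 0$.

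Your closing remark about the application is also sound, and slightly more direct than what the paper does. Take $B=\text{bcirc}(\mathcal{A})\,\text{bcirc}(\mathcal{A})^T$ on $\mathrm{Range}(\mathcal{A})$ and $x=\eta_k$. Then $\langle B\eta_k,\eta_k\rangle=\|\mathcal{A}^T*\eta_k\|_F^2$. Because $\mathcal{X}_k-\mathcal{X}_*\in\mathrm{Range}(\mathcal{A}^T)$, also $\langle B^{-1}\eta_k,\eta_k\rangle=\|\mathcal{X}_k-\mathcal{X}_*\|_F^2$. The lemma then immediately yields $\|\mathcal{P}_k(\mathcal{X}_k-\mathcal{X}_*)\|_F^2=\langle\eta_k,\eta_k\rangle^2/\|\mathcal{A}^T*\eta_k\|_F^2\ge q\|\mathcal{X}_k-\mathcal{X}_*\|_F^2$. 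The paper's form, with $B=\mathcal{A}^T*\mathcal{A}$, needs the extra substitution $x\mapsto B^{1/2}x$ on $\mathrm{Range}(\mathcal{A}^T)$. In either case $m=\sigma_{\min}^2$ must be the smallest \emph{nonzero} squared singular value.
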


Then, the convergence of the residual-based tensor Kaczmarz method is established as follows.

\begin{theorem}\label{thm:RTK}

Let \(\mathcal{X}_*\) denote the minimal Frobenius norm solution of \(\mathcal{A}* \mathcal{X} = \mathcal{B}\), and let \(\mathcal{X}_k\) be generated by the Algorithm \ref{alg:RTK} with the initial value \(\mathcal{X}_0 \in \mathrm{Range}(A^T)\).
Then, the iteration \eqref{eqn:RTK} converges and the error at the \(k\)th iteration satisfies
\[
\| \mathcal{X}_k - \mathcal{X}_* \|_F^2 \leq 
\left(1 -  q\right)^{k}
\| \mathcal{X}_0 - \mathcal{X}_* \|_F^2, 
\]
where
\[
q =
\frac{
4\sigma_{\min}^2(\mathrm{bcirc}(\mathcal{A}))
\sigma_{\max}^2(\mathrm{bcirc}(\mathcal{A}))
}{
\left(
\sigma_{\min}^2(\mathrm{bcirc}(\mathcal{A}))
+
\sigma_{\max}^2(\mathrm{bcirc}(\mathcal{A}))
\right)^2
},
\]
$\sigma_{\min}(M) $ and $\sigma_{\max}(M)$ represent the minimum singular value and the maximum singular value of the matrix $M$, respectively.
\end{theorem}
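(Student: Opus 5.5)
The plan is to show that each step of \eqref{eqn:RTK} reduces the squared error by an exact amount, and then to bound that reduction from below with the Kantorovich inequality (the lemma above). Throughout I will pass to matrix form through $\mathrm{bcirc}$ and $\mathrm{unfold}$. Write $\mathbf{A}=\mathrm{bcirc}(\mathcal{A})$ and vectorize the columns of $\mathrm{unfold}(\cdot)$. Then $\mathcal{A}*\mathcal{X}$ becomes multiplication by $I_\ell\otimes\mathbf{A}$, and $\mathcal{A}^T*\cdot$ becomes multiplication by its transpose. Frobenius norms and inner products are preserved, and the extreme nonzero singular values are those of $\mathbf{A}$. So it suffices to argue with the matrix $\mathbf{A}$ and vectors.

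\textbf{Step 1: one-step error identity.} Let $\mathcal{E}_k=\mathcal{X}_k-\mathcal{X}_*$. Since $\mathcal{A}*\mathcal{X}_*=\mathcal{B}$, we have $\eta_k=-\mathcal{A}*\mathcal{E}_k$. Write $\lambda_k=\langle\eta_k,\eta_k\rangle/\|\mathcal{A}^T*\eta_k\|_F^2$. Expanding $\mathcal{E}_{k+1}=\mathcal{E}_k+\lambda_k\mathcal{A}^T*\eta_k$ and using the adjoint identity $\langle\mathcal{E}_k,\mathcal{A}^T*\eta_k\rangle=\langle\mathcal{A}*\mathcal{E}_k,\eta_k\rangle=-\|\eta_k\|_F^2$ gives
\[
\|\mathcal{E}_{k+1}\|_F^2=\|\mathcal{E}_k\|_F^2-2\lambda_k\|\eta_k\|_F^2+\lambda_k^2\|\mathcal{A}^T*\eta_k\|_F^2
=\|\mathcal{E}_k\|_F^2-\frac{\|\eta_k\|_F^4}{\|\mathcal{A}^T*\eta_k\|_F^2}.
\]
If $\eta_k=0$, the iteration has already terminated at the solution. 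Otherwise $\mathcal{A}^T*\eta_k\neq0$, because $\eta_k\in\mathrm{Range}(\mathcal{A})$.

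\textbf{Step 2: invariance of the range.} Since $\mathcal{X}_0\in\mathrm{Range}(\mathcal{A}^T)$, each update adds a multiple of $\mathcal{A}^T*\eta_k$. By Proposition 2.1, $\mathcal{X}_*=\mathcal{A}^\dagger*\mathcal{B}$ also lies in $\mathrm{Range}(\mathcal{A}^T)$. Hence $\mathcal{E}_k=\mathcal{A}^T*\mathcal{Y}_k$ for some $\mathcal{Y}_k$, and $\mathcal{Y}_k$ may be chosen in $\mathrm{Range}(\mathcal{A})$ (project out the null space of $\mathcal{A}^T$).

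\textbf{Step 3: reduction to Kantorovich.} Let $B$ denote the operator $\mathcal{A}*\mathcal{A}^T*$ restricted to the Hilbert space $\mathcal{H}=\mathrm{Range}(\mathcal{A})$. In matrix form this is $\mathbf{A}\mathbf{A}^T$ on $\mathrm{Range}(\mathbf{A})$. It is self-adjoint and satisfies $mE\le B\le ME$ with $m=\sigma_{\min}^2(\mathbf{A})$ (the smallest nonzero singular value) and $M=\sigma_{\max}^2(\mathbf{A})$. Set $x=B\mathcal{Y}_k=-\eta_k\in\mathcal{H}$. Then
\[
\|\eta_k\|_F^2=\langle x,x\rangle,\qquad
\|\mathcal{A}^T*\eta_k\|_F^2=\langle x,Bx\rangle,\qquad
\|\mathcal{E}_k\|_F^2=\langle\mathcal{Y}_k,B\mathcal{Y}_k\rangle=\langle x,B^{-1}x\rangle.
\]
The lemma then yields
\[
\frac{\|\eta_k\|_F^4}{\|\mathcal{A}^T*\eta_k\|_F^2\,\|\mathcal{E}_k\|_F^2}\ge\frac{4mM}{(m+M)^2}=q .
\]
Combining this with Step 1 gives $\|\mathcal{E}_{k+1}\|_F^2\le(1-q)\|\mathcal{E}_k\|_F^2$. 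Induction on $k$ gives the claimed bound, and since $q\in(0,1]$ it also gives convergence.

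The main obstacle is Step 3. The care needed is in restricting to $\mathrm{Range}(\mathcal{A})$ so that $B$ is invertible with spectrum in $[m,M]$; this is exactly why the hypothesis $\mathcal{X}_0\in\mathrm{Range}(\mathcal{A}^T)$ is needed. A second point is checking that the Kronecker/bcirc vectorization turns the tensor inner products into Euclidean ones without changing the singular-value bounds.
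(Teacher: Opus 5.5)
Your proof is correct and follows essentially the same route as the paper. The paper obtains the per-step decrease $\|\eta_k\|_F^4/\|\mathcal{A}^T*\eta_k\|_F^2$ as $\|\mathcal{P}_k(\mathcal{X}_k-\mathcal{X}_*)\|_F^2$ via the Pythagorean theorem for the projection onto $\mathcal{A}^T*\eta_k$, where you expand the square directly; both then bound it below by $q\|\mathcal{X}_k-\mathcal{X}_*\|_F^2$ with the Kantorovich inequality, and your restriction of $\mathcal{A}*\mathcal{A}^T*$ to $\mathrm{Range}(\mathcal{A})$ with $x=-\eta_k$ makes explicit the change of variables and range argument the paper leaves implicit when it applies the lemma to $\langle \mathcal{E}_k,\mathcal{A}^T*\mathcal{A}*\mathcal{E}_k\rangle^2/\langle \mathcal{E}_k,(\mathcal{A}^T*\mathcal{A})^2*\mathcal{E}_k\rangle$.
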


\begin{proof}

Let $\mathcal{X}_*$ be the exact solution of $ \mathcal{A}* \mathcal{X}_*= \mathcal{B}$
and
$
\eta_k = \mathcal{B} - \mathcal{A} *\mathcal{X}_k = \mathcal{A}* (\mathcal{X}_* - \mathcal{X}_k).
$
Subtracting $\mathcal{X}_*$
from both sides of \eqref{eqn:RTK}, it follows that
\begin{eqnarray*}
\mathcal{X}_{k+1} - \mathcal{X}_* &=& \mathcal{X}_k - \mathcal{X}_* + \frac{ \langle \eta_k, \eta_k \rangle }{\|\mathcal{A}^T * \eta_k\|_F^2} \mathcal{A}^T * \eta_k\\
&=& \mathcal{X}_k - \mathcal{X}_* + \frac{ \langle \mathcal{A}* (\mathcal{X}_* - \mathcal{X}_k), \mathcal{A}* (\mathcal{X}_* - \mathcal{X}_k) \rangle }{\|\mathcal{A}^T * \eta_k\|_F^2} \mathcal{A}^T * \mathcal{A}* (\mathcal{X}_* - \mathcal{X}_k)\\
   &=& (I-\mathcal{P}_k)(\mathcal{X}_k - \mathcal{X}_*),
\end{eqnarray*}
where
$\mathcal{P}_k \mathcal{Z}=\frac{ \langle (\mathcal{X}_* - \mathcal{X}_k), \mathcal{A}^T * \mathcal{A}*\mathcal{Z} \rangle }{\|\mathcal{A}^T * \eta_k\|_F^2} \mathcal{A}^T * \mathcal{A}* (\mathcal{X}_* - \mathcal{X}_k)$ is an orthogonal projection
($\mathcal{Z}$ is a tensor with compatible dimensions).

Taking the Frobenius norm of the equality and using the Pythagorean theorem for tensor orthogonal projection in \cite{2022MM}, it is obvious that
\begin{eqnarray*}
\left\| \mathcal{X}_{k+1} - \mathcal{X}_* \right\|_F^2  &=& \left\| (I-\mathcal{P}_{k} )(\mathcal{X}_k - \mathcal{X}_*) \right\|_F^2 \\
&=& \left\| \mathcal{X}_k - \mathcal{X}_* \right\|_F^2 -\left\| \mathcal{P}_k(\mathcal{X}_k - \mathcal{X}_*) \right\|_F^2.
\end{eqnarray*}

According to Lemma 3.1 of the Kantorovich inequality, it is obtained that
\begin{align*}
\left\| \mathcal{P}_k(\mathcal{X}_k - \mathcal{X}_*) \right\|_F^2
&= \big\langle \mathcal{P}_k(\mathcal{X}_k - \mathcal{X}_*),
               \mathcal{P}_k(\mathcal{X}_k - \mathcal{X}_*) \big\rangle \\[0.3em]
&= \frac{
\big\langle \mathcal{X}_k - \mathcal{X}_*,
            \mathcal{A}^T * \mathcal{A} * (\mathcal{X}_k - \mathcal{X}_*) \big\rangle^2
}{
\big\langle \mathcal{X}_k - \mathcal{X}_*,
            (\mathcal{A}^T * \mathcal{A})^2 * (\mathcal{X}_k - \mathcal{X}_*) \big\rangle
} \\[0.5em]
&\ge q\,
   \big\langle \mathcal{X}_k - \mathcal{X}_*,
               \mathcal{X}_k - \mathcal{X}_* \big\rangle.
\end{align*}
Note that $0 < q < 1$. It follows that the error at the \(k\)th iteration satisfies
\begin{eqnarray*}
\| \mathcal{X}_k - \mathcal{X}_* \|_F^2 & \leq & ( 1 -  q )
\| \mathcal{X}_{k-1} - \mathcal{X}_* \|_F^2 \\
& \leq &( 1 -  q)^{k}
\| \mathcal{X}_0 - \mathcal{X}_* \|_F^2,
\end{eqnarray*}
which completes the proof.
\end{proof}

Polyak’s momentum, which is also called the heavy ball method, was first proposed in \cite{1964Polyak}, widely used in randomized Kaczmarz methods for applications in computed tomography \cite{jin2024adaptive} and video recovery \cite{2024LLY}.

Inspired by the idea of heavy ball momentum acceleration, a residual-based Kaczmarz method with heavy ball momentum is proposed as follows
\begin{equation}\label{RTK-HB}
\mathcal{X}_{k+1} = \mathcal{X}_k + \frac{ \langle \eta_k, \eta_k \rangle }{\|\mathcal{A}^T * \eta_k\|_F^2} \mathcal{A}^T * \eta_k + \gamma_k (\mathcal{X}_k- \mathcal{X}_{k-1}).    
\end{equation}

\begin{algorithm}[!htbp]
	\centering
	\caption{Residual-based Kaczmarz method with heavy ball momentum}\label{alg:RTK-HB}
	\begin{algorithmic}[1]
	\STATE{\textbf{Input}: $\mathcal{A} \in \mathbb{R}^{n_1 \times n_2 \times n_3}$, $\mathcal{B}\in \mathbb{R}^{n_1 \times \ell \times n_3} $, $\mathcal{X}_{0}=0\in \mathbb{R}^{n_2 \times \ell \times n_3}$}.
		\STATE{\textbf{Output}: $\mathcal{X}_{k+1}$}.
		\FOR{$k=0,1,2,\ldots$}
		\STATE Compute:
		 $\eta_k= \mathcal{B} - \mathcal{A}* \mathcal{X}_k$,
%        \STATE Compute:
%		 $\mathcal{A}^T * \eta_k$.
		\STATE Update:
		 $\mathcal{X}_{k+1} = \mathcal{X}_k + \frac{ \langle \eta_k, \eta_k \rangle }{\|\mathcal{A}^T * \eta_k\|_F^2} \mathcal{A}^T * \eta_k + \gamma_k (\mathcal{X}_k- \mathcal{X}_{k-1})$.
		\ENDFOR
	   \end{algorithmic}
    \end{algorithm}

In order to choose the suitable parameter $\gamma_k$, we minimize the function
\[
G(\gamma_k)
= \frac{1}{2}
\left\|
\mathcal{X}_k
+ \alpha_k \mathcal{A}^T * \eta_k
+ \gamma_k \bigl(\mathcal{X}_k - \mathcal{X}_{k-1}\bigr)
- \mathcal{X}_*
\right\|_F^2 ,
\]
where
\[
\alpha_k
= \frac{\langle \eta_k, \eta_k \rangle}
{\|\mathcal{A}^T * \eta_k\|_F^2}.
\]

By taking the derivative of $G(\gamma_k)$ with respect to $\gamma_k$, it is obtained that
\[
\frac{dG}{d\gamma_k}
=
\alpha_k
\left\langle
\mathcal{A}^T * \eta_k,\,
\mathcal{X}_k - \mathcal{X}_{k-1}
\right\rangle
+
\gamma_k
\left\|
\mathcal{X}_k - \mathcal{X}_{k-1}
\right\|_F^2
+
\left\langle
\mathcal{X}_k - \mathcal{X}_{k-1},\,
\mathcal{X}_k - \mathcal{X}_*
\right\rangle.
\]
It is derived from $\frac{dG}{d\gamma_k} = 0$ that
\[
\gamma_k
=
\frac{
-\alpha_k
\left\langle
\mathcal{A}^T * \eta_k,\,
\mathcal{X}_k - \mathcal{X}_{k-1}
\right\rangle
-
\left\langle
\mathcal{X}_k - \mathcal{X}_{k-1},\,
\mathcal{X}_k - \mathcal{X}_*
\right\rangle
}{
\left\|
\mathcal{X}_k - \mathcal{X}_{k-1}
\right\|_F^2
}.
\]

Since
\[
\frac{d^2G}{d\gamma_k^2}
=
\|\mathcal{X}_k - \mathcal{X}_{k-1}\|_F^2 \ge 0,
\]
the function $G(\gamma_k)$ is convex in $\gamma_k$, which guarantees that the true stationary point is the global minimizer.

However, since the exact solution $\mathcal X_*$ is unknown in practical situations, 
the term $\langle \mathcal X_k - \mathcal X_{k-1},\, \mathcal X_k -\mathcal X_* \rangle$ cannot be computed explicitly. One possible approximate value of $\gamma_k$ is chosen as
\[
\gamma_k
=
\frac{
-\alpha_k
\left\langle
\mathcal{A}^T * \eta_k,\,
\mathcal{X}_k - \mathcal{X}_{k-1}
\right\rangle}{
\left\|
\mathcal{X}_k - \mathcal{X}_{k-1}
\right\|_F^2
}.
\]

Although the adopted $\gamma_k$ is approximate, it is derived from the dominant computable term and thus still preserves the descent tendency.

Before the convergence of the residual-based tensor Kaczmarz with heavy ball momentum is established, a lemma is reviewed as follows.

\begin{lemma}\label{lem:root}
Given a real-coefficient quadratic equation
\[
\lambda^{2} + a\lambda + b = 0.
\]
Then, the absolute value of two roots satisfy $|\lambda|< 1$ if and only if
\[
|b| < 1,
\]
\[
1+a+b > 0,
\]
\[
1-a+b > 0.
\]
\end{lemma}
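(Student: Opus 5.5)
This is the Jury (Schur–Cohn) stability criterion for a real monic quadratic. I will prove it by a case split on the sign of the discriminant $\Delta = a^2 - 4b$. Throughout, write $p(\lambda)=\lambda^2+a\lambda+b$, so that $p(1)=1+a+b$ and $p(-1)=1-a+b$. The roots $\lambda_1,\lambda_2$ satisfy $\lambda_1\lambda_2=b$ and $\lambda_1+\lambda_2=-a$.

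\emph{Necessity.} Assume $|\lambda_1|,|\lambda_2|<1$.
\begin{itemize}
\item Then $|b|=|\lambda_1||\lambda_2|<1$.
\item If the roots are complex conjugates, $p$ has no real zero and is monic, so $p(\pm1)>0$.
\item If the roots are real, they lie in $(-1,1)$. Hence $p(1)=(1-\lambda_1)(1-\lambda_2)>0$ and $p(-1)=(1+\lambda_1)(1+\lambda_2)>0$.
\end{itemize}

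\emph{Sufficiency.} Assume the three inequalities hold.
\begin{itemize}
\item \textbf{Case $\Delta<0$.} The roots are conjugate, so $|\lambda_1|^2=\lambda_1\bar\lambda_1=b$. Then $|b|<1$ gives $|\lambda_i|<1$ directly.
\item \textbf{Case $\Delta\ge 0$.} The roots are real. Adding $p(1)>0$ and $p(-1)>0$ gives $1+b>0$. Subtracting suitably gives $|a|<1+b<2$, so the vertex $-a/2$ lies in $(-1,1)$.
\end{itemize}

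In the real-root case, since $p$ is an upward parabola with $p(\pm1)>0$ and its vertex inside $(-1,1)$, both real zeros lie strictly between $-1$ and $1$. The zeros cannot lie outside $[-1,1]$ because the vertex is inside, and they cannot equal $\pm1$ because $p(\pm1)\neq 0$.

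A more concrete alternative is to check directly that $\frac{|a|+\sqrt{a^2-4b}}{2}<1$. This is equivalent to $\sqrt{a^2-4b}<2-|a|$, where $2-|a|>0$. Squaring gives $a^2-4b<4-4|a|+a^2$, i.e.\ $1-|a|+b>0$, which is exactly $\min(p(1),p(-1))>0$.

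The main obstacle is the real-root case, where one must avoid confusing it with the complex case. Here $|b|<1$ alone does not suffice; the conditions $p(\pm1)>0$ are what bound the larger root. The explicit inequality $\sqrt{a^2-4b}<2-|a|$ is the cleanest route, and it also delivers the necessity direction, since each step there is an equivalence.
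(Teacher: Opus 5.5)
Your proof is correct, but there is no proof in the paper to compare it with. The paper introduces this lemma with ``a lemma is reviewed as follows'' and uses it without proof. It is the standard Jury (Schur--Cohn) stability test for a real monic quadratic. Your case split on the discriminant therefore supplies the elementary verification the paper omits.

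Both directions check out. In the non-real case, $|\lambda_i|^2=\lambda_1\bar\lambda_1=\lambda_1\lambda_2=b$, and $p(\pm1)>0$ holds automatically. In the real case, the largest root modulus is $\frac{|a|+\sqrt{a^2-4b}}{2}$, and its being $<1$ reduces to $1-|a|+b>0$ together with $|a|<2$.

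Two small points of polish:
\begin{itemize}
\item The sentence ``the zeros cannot lie outside $[-1,1]$ because the vertex is inside'' should be made precise. Note that $p\le 0$ on $[\lambda_1,\lambda_2]$ and this interval contains $-a/2\in(-1,1)$. So $\lambda_2\ge 1$ would put $1$ in the interval and force $p(1)\le 0$; the case $\lambda_1\le -1$ is symmetric.
\item Your claim that the explicit inequality ``also delivers necessity since each step is an equivalence'' needs a side condition. Squaring $\sqrt{a^2-4b}<2-|a|$ is reversible only when $2-|a|>0$. For necessity this follows from $|a|\le|a|+\sqrt{a^2-4b}<2$. For sufficiency it follows from $|a|<1+b<2$, which genuinely uses $b<1$. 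Without it, $a=4.5$, $b=4$ satisfies $p(\pm1)>0$ with $a^2\ge 4b$, yet both roots lie outside the unit disk.
\end{itemize}
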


According to the lemma above, the theorem of the convergence of the residual-based Kaczmarz method with heavy ball momentum is given below.

\begin{theorem}
Let $\mathcal{X}_*$ denote the minimal Frobenius norm solution of the
consistent tensor linear system $\mathcal{A} * \mathcal{X} = \mathcal{B},$ and let $\mathcal{X}_0  \in \mathrm{Range}(\mathcal{A}^T)$.
Let $\{\mathcal{X}_k\}$ be generated by the residual-based Kaczmarz method with heavy ball momentum, and q is defined in Theorem \ref{thm:RTK}.

Then, Algorithm \ref{alg:RTK-HB} converges linearly in expectation when $0 \le \gamma_k < \gamma_*,$ where $\gamma_* =
\frac{-(5-q) + \sqrt{(5-q)^2 + 16q}}{8}.$ More precisely, there exists $\rho \in (0,1)$ and $C>0$ such that
\[
\|\mathcal{X}_k - \mathcal{X}_*\|_F^2
\le
C\rho^k.
\]
\end{theorem}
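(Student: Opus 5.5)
The plan is to derive a two-step recursion for the error norms and then read off linear convergence from Lemma \ref{lem:root}. Write $e_k = \mathcal{X}_k - \mathcal{X}_*$. Since $\mathcal{X}_0 \in \mathrm{Range}(\mathcal{A}^T)$ and every update direction $\mathcal{A}^T*\eta_k$ and $\mathcal{X}_k-\mathcal{X}_{k-1}$ lies in $\mathrm{Range}(\mathcal{A}^T)$, induction shows $e_k \in \mathrm{Range}(\mathcal{A}^T)$ for all $k$. This is exactly the setting in which the Kantorovich estimate from the proof of Theorem \ref{thm:RTK} applies.

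Decompose the iteration as $e_{k+1} = (I-\mathcal{P}_k)e_k + \gamma_k(e_k - e_{k-1})$, where $\mathcal{P}_k$ is the orthogonal projection from the proof of Theorem \ref{thm:RTK}. By that proof, $\|(I-\mathcal{P}_k)e_k\|_F^2 \le (1-q)\|e_k\|_F^2$. The triangle inequality then gives
\[
\|e_{k+1}\|_F \le \sqrt{1-q}\,\|e_k\|_F + \gamma_k\|e_k\|_F + \gamma_k\|e_{k-1}\|_F .
\]
It seems cleaner to work with squares, as the threshold $\gamma_*$ suggests. Using $(a+b+c)^2$-type bounds such as $\|u+v\|^2\le 2\|u\|^2+2\|v\|^2$, or more carefully $\|u+\gamma w\|^2 \le (1+\gamma)\|u\|^2+\gamma(1+\gamma)\|w\|^2$ and $\|e_k-e_{k-1}\|^2\le 2\|e_k\|^2+2\|e_{k-1}\|^2$, I aim for an inequality
\[
\|e_{k+1}\|_F^2 \le \bigl(a_1(q)+c_1\gamma+c_2\gamma^2\bigr)\|e_k\|_F^2 + \bigl(c_3\gamma+c_4\gamma^2\bigr)\|e_{k-1}\|_F^2 ,
\]
with explicit constants chosen so that the stability condition below reproduces the quadratic $4\gamma^2+(5-q)\gamma-q<0$. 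That quadratic's positive root is precisely $\gamma_*$, so I will tune the Young-type splitting until this quadratic appears; this bookkeeping is the main obstacle. Where $\gamma_k$ varies with $k$, I will bound it by $\gamma:=\sup_k\gamma_k<\gamma_*$, using monotonicity of the coefficients in $\gamma$.

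Given a recursion $r_{k+1}\le \beta r_k + \delta r_{k-1}$ with $\beta,\delta\ge0$, compare it with the linear recurrence whose characteristic polynomial is $\lambda^2-\beta\lambda-\delta=0$. By Lemma \ref{lem:root}, with $a=-\beta$ and $b=-\delta$, both roots lie in the unit disk iff $\delta<1$, $1-\beta-\delta>0$ and $1+\beta-\delta>0$. With nonnegative coefficients the operative condition is $\beta+\delta<1$, and this reduces to $4\gamma^2+(5-q)\gamma-q<0$, i.e.\ $0\le\gamma<\gamma_*$. Let $\rho$ be the largest root modulus, which is less than $1$. A standard induction, comparing with the dominating solution $r_k\le C\rho^k$ and taking $C=\max\{\|e_0\|_F^2,\|e_1\|_F^2/\rho\}$, then yields the claim. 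Because the iteration is deterministic, the bound holds in particular "in expectation".
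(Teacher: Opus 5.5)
Your proposal is correct and follows the paper's proof. The ``more careful'' splitting you list, $\|u+\gamma w\|_F^2\le(1+\gamma)\|u\|_F^2+\gamma(1+\gamma)\|w\|_F^2$ together with $\|e_k-e_{k-1}\|_F^2\le 2\|e_k\|_F^2+2\|e_{k-1}\|_F^2$, is exactly the paper's, and it gives $\beta=(1+\gamma)(1-q)+2\gamma(1+\gamma)$ and $\delta=2\gamma(1+\gamma)$, so that $\beta+\delta=(1+\gamma)(1-q+4\gamma)<1\iff 4\gamma^2+(5-q)\gamma-q<0$ and no further tuning is needed.

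Three steps the paper leaves implicit are made rigorous in your version: the range-invariance induction, the uniform bound $\sup_k\gamma_k<\gamma_*$ (the paper treats $\gamma_k$ as a constant), and the comparison induction with $C=\max\{\|e_0\|_F^2,\|e_1\|_F^2/\rho\}$.
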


\begin{proof}
Define the error tensor $\mathcal{E}_k = \mathcal{X}_k - \mathcal{X}_*,$ and it is obvious that
\[
\eta_k
=
\mathcal{B} - \mathcal{A} * \mathcal{X}_k
=
\mathcal{A} * (\mathcal{X}_* - \mathcal{X}_k)
=
- \mathcal{A} * \mathcal{E}_k.
\]

The iteration without momentum is shown as follows
\[
\widetilde{\mathcal{X}}_{k+1}
=
\mathcal{X}_k
+
\frac{\langle \eta_k,\eta_k\rangle}
{\|\mathcal{A}^T * \eta_k\|_F^2}
\mathcal{A}^T * \eta_k,
\]
and the corresponding error satisfies
\[
\widetilde{\mathcal{E}}_{k+1}
=
(I - \mathcal P_k)\mathcal{E}_k,
\]
where $\mathcal P_k$ is the orthogonal projection operator defined in
Theorem \ref{thm:RTK}. By Theorem \ref{thm:RTK}, it is shown that
\begin{equation}\label{eq:3.6}
\|\widetilde{\mathcal{E}}_{k+1}\|_F^2
\le
(1-q)\|\mathcal{E}_k\|_F^2.
\end{equation}

Then, the iteration with heavy ball momentum is exhibited as follows
\begin{equation*}
\mathcal{X}_{k+1}
=
\widetilde{\mathcal{X}}_{k+1}
+
\gamma_k(\mathcal{X}_k - \mathcal{X}_{k-1}),
\end{equation*}
which yields the error recursion
\begin{equation}\label{eq:3.7}
\mathcal{E}_{k+1}
=
\widetilde{\mathcal{E}}_{k+1}
+
\gamma_k(\mathcal{E}_k - \mathcal{E}_{k-1}).    
\end{equation}

Taking the Frobenius norm on both sides of \eqref{eq:3.7} and expanding the right hand side, it follows that
\[
\|\mathcal{E}_{k+1}\|_F^2
=
\|\widetilde{\mathcal{E}}_{k+1}\|_F^2
+
\gamma_k^2\|\mathcal{E}_k - \mathcal{E}_{k-1}\|_F^2
+
2\gamma_k
\langle
\widetilde{\mathcal{E}}_{k+1},
\mathcal{E}_k - \mathcal{E}_{k-1}
\rangle.
\]

By applying the Cauchy–Schwarz inequality and Young's inequality
$2ab \le a^2 + b^2$, it follows that
\[
\|\mathcal{E}_{k+1}\|_F^2
\le
(1+\gamma_k)\|\widetilde{\mathcal{E}}_{k+1}\|_F^2
+
\gamma_k(1+\gamma_k)
\|\mathcal{E}_k - \mathcal{E}_{k-1}\|_F^2.
\]
From the formula \eqref{eq:3.6} and the inequality $\|\mathcal{E}_k - \mathcal{E}_{k-1}\|_F^2
\le
2\|\mathcal{E}_k\|_F^2
+
2\|\mathcal{E}_{k-1}\|_F^2$, it gives that
\[
\|\mathcal{E}_{k+1}\|_F^2
\le
\big[(1+\gamma_k)(1-q)+2\gamma_k(1+\gamma_k)\big]
\|\mathcal{E}_k\|_F^2
+
2\gamma_k(1+\gamma_k)
\|\mathcal{E}_{k-1}\|_F^2.
\]

Denote $v_k$ by $\|\mathcal{E}_k\|_F^2$, then the sequence $\{v_k\}$ satisfies the second-order recursion
\begin{equation*}
v_{k+1}
\le
a v_k + b v_{k-1},
\end{equation*}
where $a = (1+\gamma_k)(1-q) + 2\gamma_k(1+\gamma_k),$ 
$b = 2\gamma_k(1+\gamma_k).$

Denote $\lambda_1,\lambda_2$ by the roots of the  associated characteristic equation. According to Lemma \ref{lem:root} and the definition of exponential stability, if the spectral radius $\rho := \max\{|\lambda_1|,|\lambda_2|\}$ 
satisfies $\rho < 1$, then $v_k \le C\rho^k$ 
for some constant $C>0$, which implies
\[
\|\mathcal{E}_k\|_F^2
\le
C\rho^k,
\]
when
\[
0 \le \gamma_k < \min\{\gamma_*,\frac{-1+\sqrt{3}}{2}\},
\]where $\gamma_* =
\frac{-(5-q) + \sqrt{(5-q)^2 + 16q}}{8}.$

Therefore, for $0 \le \gamma_k < \gamma_*$, the spectral radius is strictly less than $1$,
and thus the iteration \eqref{RTK-HB} achieves linear convergence in expectation.
\end{proof}

\section{Numerical Experiments} \label{sec4}

In this section, numerical experiments are presented to evaluate the convergence performance of the proposed residual-based tensor Kaczmarz method (denoted by RTK) and residual-based tensor Kaczmarz method with heavy ball momentum (denoted by RTK-HB). These methods are compared with the randomized Kaczmarz method \cite{2022MM}, the sampling Kaczmarz-Motzkin method \cite{2025NYL} and the almost-maximal residual block Kaczmarz method \cite{2024ZH} for tensor linear systems, shortened by TRK, TSKM and TBEM, respectively.

All the iterations stop when the relative solution error satisfies $\mathrm{RSE} =
\frac{\left\| \mathcal{X} - \mathcal{X}^\ast \right\|_F}
{\left\| \mathcal{X}^\ast \right\|_F}<10^{-6} $ or the number of the iterations achieves the maximal steps, e.g., 5000. The number of iterations denoted by IT and computational time in seconds denoted by CPU are compared. In image recovery, the peak signal-to-noise ratio (denoted by PSNR) in terms of the tensor \( \mathcal{X} \) can be expressed as
\[
\text{PSNR} = 10 \cdot \log_{10} \left( \frac{\| \mathcal{X}_{\text{max}} \|^2}{\text{MSE}} \right),
\]
where \( \mathcal{X}_{\text{max}} \) is the maximum pixel intensity across all channels and the mean squared error (denoted by MSE) is defined as
\[
\text{MSE} = \frac{1}{m n c} \sum_{i=1}^{m} \sum_{j=1}^{n} \sum_{k=1}^{c} \left( \mathcal{X}^*_{i,j,k} - \mathcal{X}_{i,j,k} \right)^2,
\]
where  $\mathcal{X}^*_{i,j,k},\mathcal{X}_{i,j,k} \in\ \mathbb{R}^{m\times n\times c}$ are established from the original image and the recovered image.

\begin{example}
In this experiment, the test tensors are random tensors. Both the coefficient tensor $\mathcal{A} \in \mathbb{R}^{m \times l \times n}$ and the true solution tensor $\mathcal{X}^\ast \in \mathbb{R}^{l \times p \times n}$ are randomly generated from the standard normal distribution. The corresponding right-hand side tensor $\mathcal{B} \in \mathbb{R}^{m \times p \times n}$ is obtained by computing the $t$-product of $\mathcal{A}$ and $\mathcal{X}^\ast$. If $m \ge l$, the resulting tensor linear system is over-determined; otherwise, it is under-determined. The initial guess is the zero tensor.

The number of iterations and the elapsed CPU time of the five methods are listed in Tables \ref{tab:overdetermined_results} and \ref{tab:underdetermined_results} for over-determined and under-determined cases respectively when the size of problems varies.

\begin{table}[!htbp]
\caption{Numerical results for over-determined tensors.}
\centering
\resizebox{\textwidth}{!}{
\begin{tabular}{cccccccccccccc}
\hline
$m$ & $l$ & $n$ & $p$
& \multicolumn{2}{c}{TRK}
& \multicolumn{2}{c}{TSKM}
& \multicolumn{2}{c}{TBEM}
& \multicolumn{2}{c}{RTK}
& \multicolumn{2}{c}{RTK-HB} \\
\cline{5-6} \cline{7-8} \cline{9-10} \cline{11-12} \cline{13-14}
& & &
& IT & CPU
& IT & CPU
& IT & CPU
& IT & CPU
& IT & CPU \\
\hline
200 & 20 & 10 & 20 & 650 & 0.2917 & 448 & 0.3261 & 86 & 0.1788 & 21 & 0.0591 & 16 & 0.0379 \\
500 & 20 & 10 & 20 & 550 & 0.2050 & 442 & 0.4366 & 64 & 0.2710 & 12 & 0.0744 & 10 & 0.0619 \\
500 & 40 & 20 & 20 & 1201 & 0.7604 & 958 & 1.9069 & 103 & 0.9194 & 18 & 0.2229 & 13 & 0.1593 \\
500 & 40 & 10 & 20 & 1212 & 0.5579 & 917 & 1.1011 & 93 & 0.4631 & 18 & 0.1552 & 14 & 0.1250 \\
500 & 20 & 10 & 40 & 562 & 0.2420 & 440 & 0.5181 & 66 & 0.4170 & 12 & 0.1128 & 10 & 0.0889 \\
800 & 20 & 10 & 20 & 555 & 0.1862 & 438 & 0.4386 & 33 & 0.2130 & 10 & 0.0867 & 9 & 0.0813 \\
\hline
\end{tabular}
}
\label{tab:overdetermined_results}
\end{table}

\begin{table}[!htbp]
\caption{Numerical results for under-determined tensors.}
\centering
\resizebox{\textwidth}{!}{
\begin{tabular}{cccccccccccccc}
\hline
$m$ & $l$ & $n$ & $p$
& \multicolumn{2}{c}{TRK}
& \multicolumn{2}{c}{TSKM}
& \multicolumn{2}{c}{TBEM}
& \multicolumn{2}{c}{RTK}
& \multicolumn{2}{c}{RTK-HB} \\
\cline{5-6} \cline{7-8} \cline{9-10} \cline{11-12} \cline{13-14}
& & &
& IT & CPU
& IT & CPU
& IT & CPU
& IT & CPU
& IT & CPU \\
\hline
20 & 200 & 10 & 20 & 492 & 0.5950 & 211 & 0.6145 & 110 & 0.2018 & 20 & 0.0322 & 16 & 0.0294 \\
20 & 500 & 10 & 20 & 354 & 1.2646 & 145 & 1.7831 & 75 & 0.4605 & 13 & 0.0781 & 11 & 0.0708 \\
40 & 500 & 20 & 20 & 1113 & 7.0888 & 383 & 6.7187 & 132 & 1.6891 & 18 & 0.2463 & 14 & 0.2050 \\
40 & 500 & 10 & 20 & 1057 & 3.6406 & 391 & 4.1208 & 147 & 0.8951 & 18 & 0.1347 & 14 & 0.1099 \\
20 & 500 & 10 & 40 & 348 & 2.1511 & 146 & 2.5844 & 82 & 0.7124 & 13 & 0.1146 & 11 & 0.1085 \\
20 & 800 & 10 & 20 & 328 & 1.9722 & 131 & 2.2981 & 64 & 0.5366 & 11 & 0.1206 & 9 & 0.1130 \\
\hline
\end{tabular}
}
\label{tab:underdetermined_results}
\end{table}

From Tables \ref{tab:overdetermined_results} and \ref{tab:underdetermined_results}, it is observed that both residual-based Kaczmarz and the accelerated variant are faster than the other three methods and the residual-based tensor Kaczmarz method with heavy ball momentum is always the fastest.

The curves of the relative error versus the number of iterations are plotted in Figs. \ref{fig:convergence1} and \ref{fig:convergence2} for the five methods, respectively.

\begin{figure}[!htbp]
\centering
\includegraphics[width=0.45\textwidth]{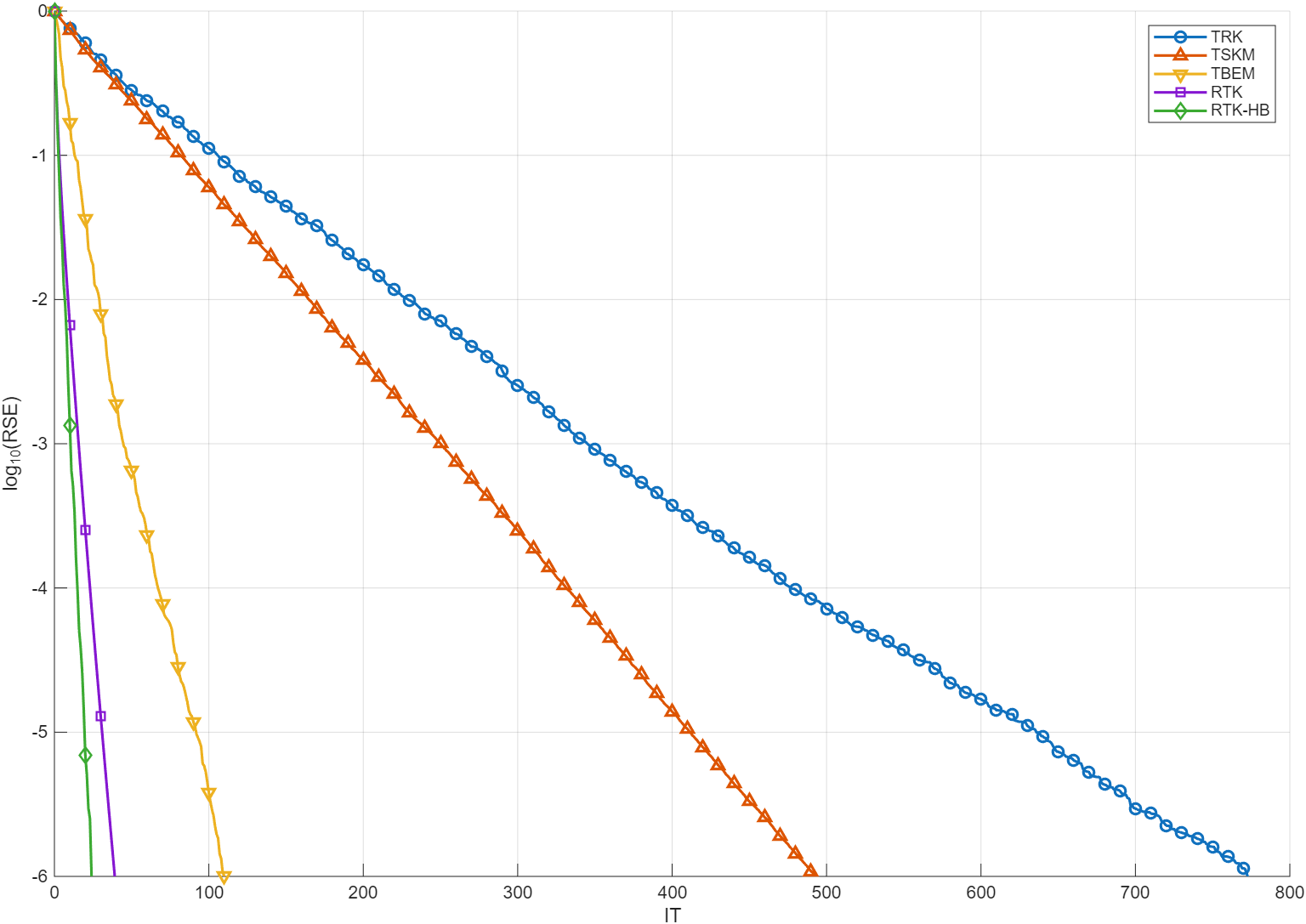}
\hspace{0.5cm}
\includegraphics[width=0.45\textwidth]{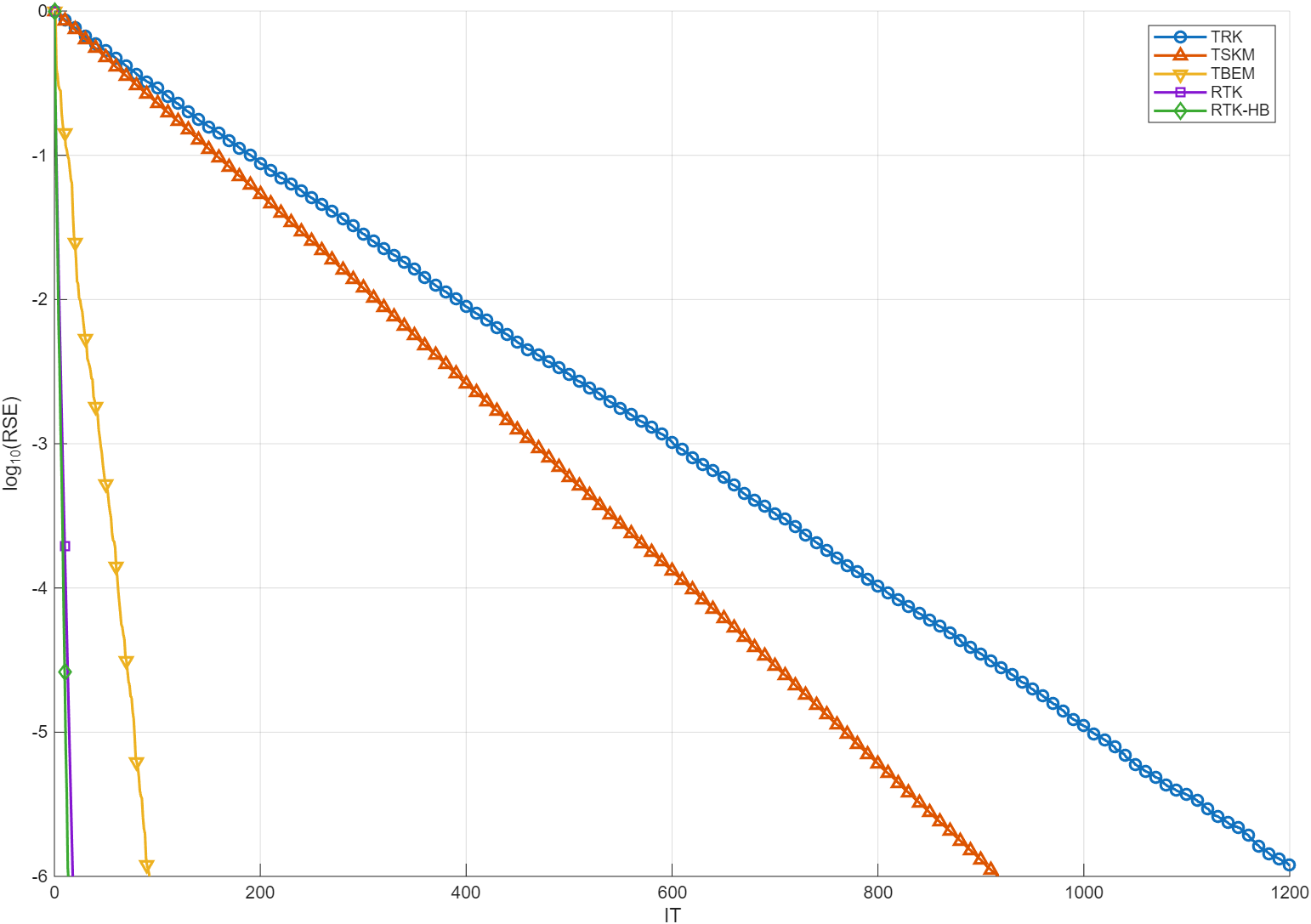}

\vspace{0.3cm}
(a) $100\times 20\times10$ \hspace{5cm} (b) $500\times 40\times10$

\vspace{0.3cm}
\caption{Convergence curves for over-determined tensors.} 
\label{fig:convergence1} 
\end{figure}

\begin{figure}[!htbp]
\centering
\includegraphics[width=0.45\textwidth]{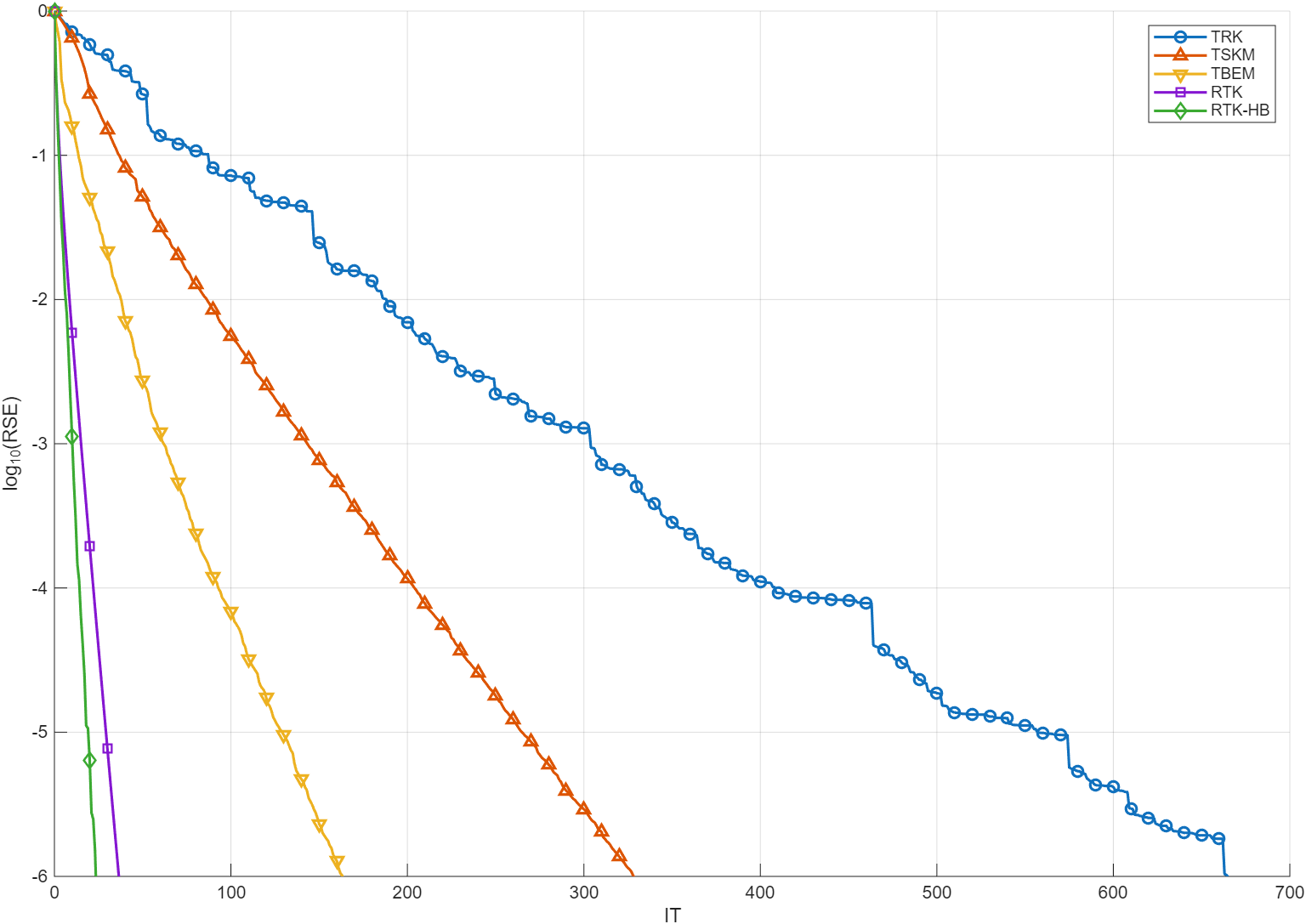}
\hspace{0.5cm}
\includegraphics[width=0.45\textwidth]{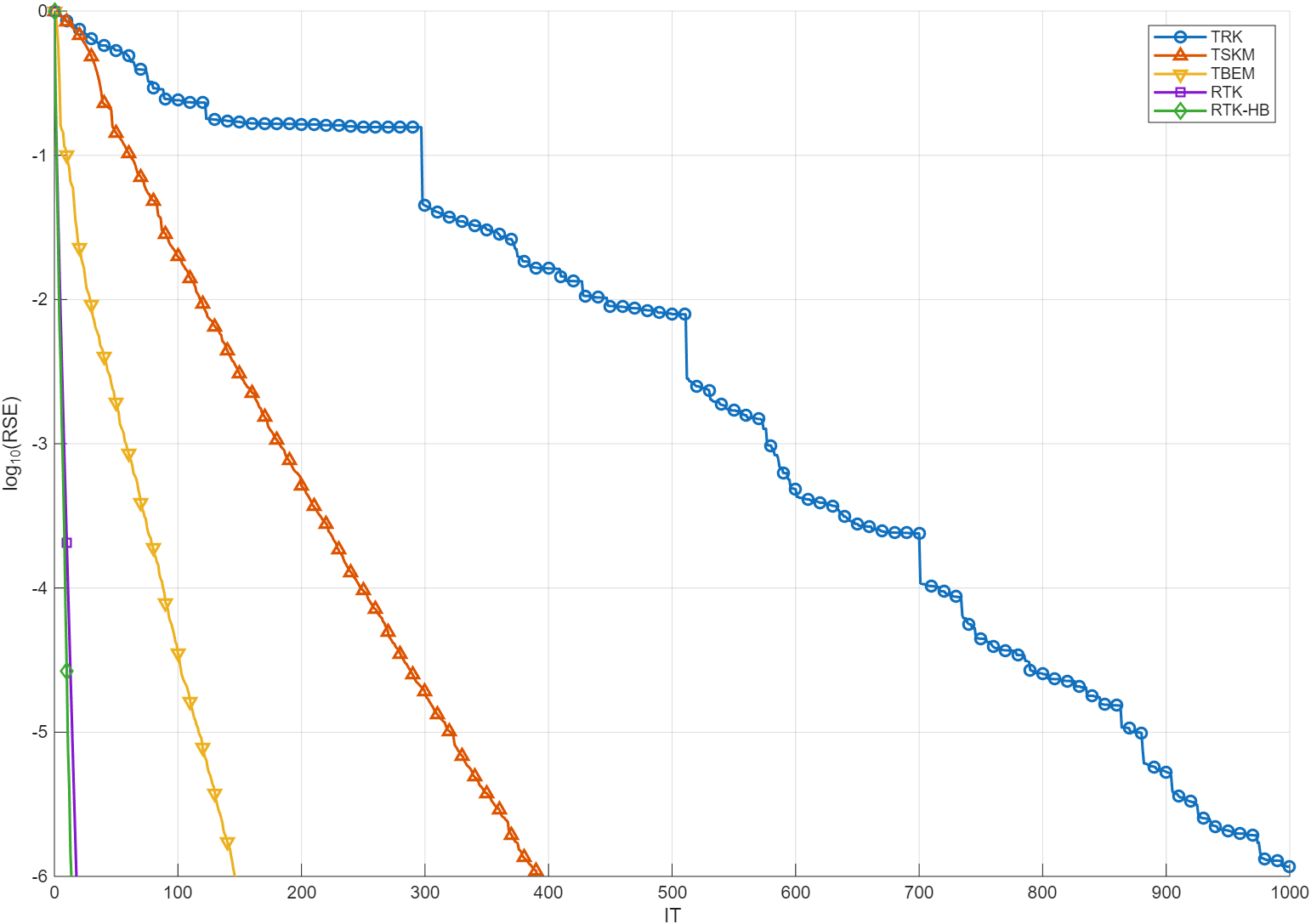}

\vspace{0.3cm}
(a) $20\times 100\times10$ \hspace{5cm} (b) $40\times 500\times10$

\vspace{0.3cm}
\caption{Convergence curves for under-determined tensors.} 
\label{fig:convergence2} 
\end{figure}

It is seen from Figs. \ref{fig:convergence1} and \ref{fig:convergence2} that the residual-based tensor Kaczmarz method converges significantly faster than the existing methods, while the residual-based tensor Kaczmarz method with heavy ball momentum exhibits even superior performance with the relative error decreasing most rapidly.
\end{example}

\begin{example}  \label{sy2}
In this experiment, the test tensors are generated with some images \cite{xu2026quaternion,WEI2026112609}. It evaluates the performance of the five methods on image deblurring. To fit the proposed tensor model, the data $\mathcal{X}$ of each image is reshaped into a third-order tensor of size $308 \times 1 \times 308$. Moreover, the measurement tensor $\mathcal{A} \in \mathbb{R}^{308 \times 308 \times 308}$ 
is constructed from the Gaussian blur kernel using a Toeplitz-based representation. The data of the blurred image is got by calculating $\mathcal{B}=\mathcal{A} *\mathcal{X}$ . The maximum number of iterations is set to 400 for the four images.

The image recovery results of five methods are presented in Fig. \ref{fig:image1}.

% \begin{figure}[!htbp]
%     \centering
%     \includegraphics[width=0.75\textwidth]{数值实验2/Figure_1.png}
    
%     % \vspace{0.3cm} 
%     \caption{The original clean image sequence and the recovered results of five methods.}
%     \label{fig:image1} 
% \end{figure}

\begin{figure}[!htbp]
    \centering

    \begin{minipage}{\textwidth}
        \begin{minipage}[c]{0.02\textwidth}
         \rotatebox{90}{Image 1}
        \end{minipage}
        \begin{minipage}[c]{0.9\textwidth}
            \centering
            \includegraphics[width=\textwidth]{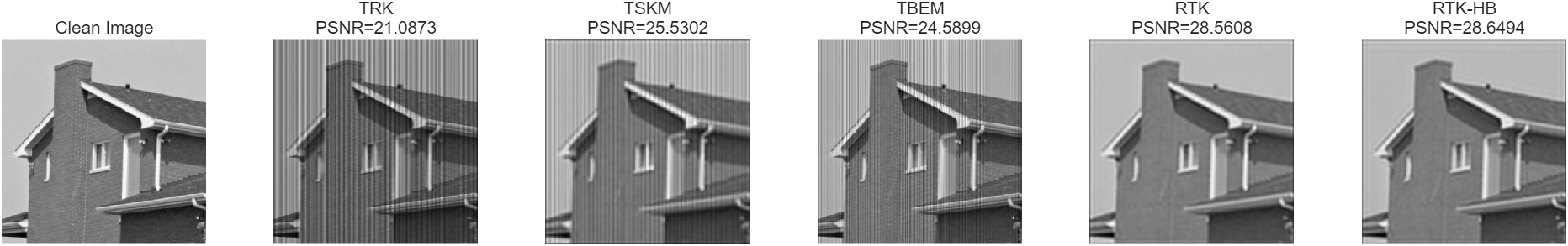}
        \end{minipage}
    \end{minipage}

    \vspace{0.3cm}

    \begin{minipage}{\textwidth}
        \begin{minipage}[c]{0.02\textwidth}
            \rotatebox{90}{Image 2}
        \end{minipage}
        \begin{minipage}[c]{0.9\textwidth}
            \centering
            \includegraphics[width=\textwidth]{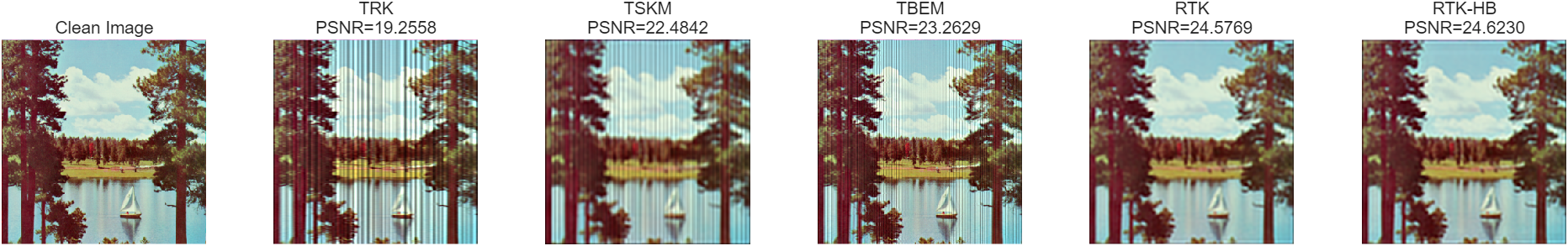}
        \end{minipage}
    \end{minipage}

    \vspace{0.3cm}

    \begin{minipage}{\textwidth}
        \begin{minipage}[c]{0.02\textwidth}
           \rotatebox{90}{Image 3}
        \end{minipage}
        \begin{minipage}[c]{0.9\textwidth}
            \centering
            \includegraphics[width=\textwidth]{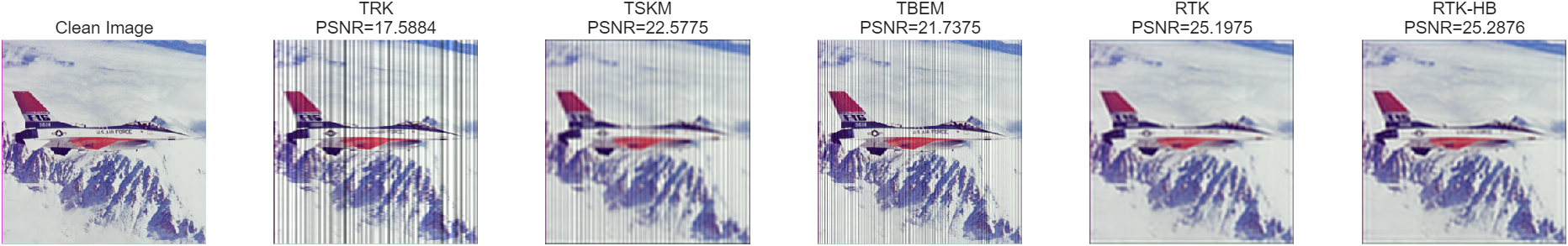}
        \end{minipage}
    \end{minipage}

    \vspace{0.3cm}

    \begin{minipage}{\textwidth}
        \begin{minipage}[c]{0.02\textwidth}
            \rotatebox{90}{Image 4}
        \end{minipage}
        \begin{minipage}[c]{0.9\textwidth}
            \centering
            \includegraphics[width=\textwidth]{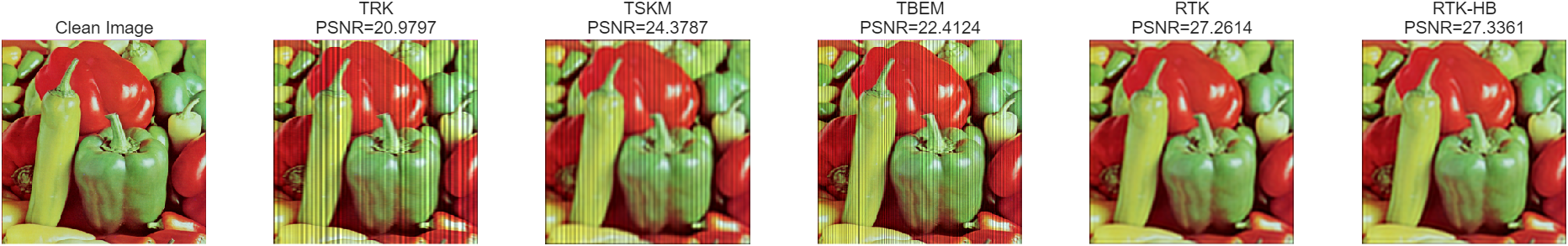}
        \end{minipage}
    \end{minipage}

    \caption{The original clean image sequence and the recovered results of five methods.}
    \label{fig:image1}
\end{figure}

It is seen from Fig. \ref{fig:image1} that the PSNR values of the two proposed methods are significantly higher than those of the other three methods.

The curves of the relative error versus the number of iterations and CPU time of Image 1 ``House'' are plotted in Fig. \ref{fig:convergence3} for the five methods, respectively.

\begin{figure}[!htbp]
\centering
\includegraphics[width=0.45\textwidth]{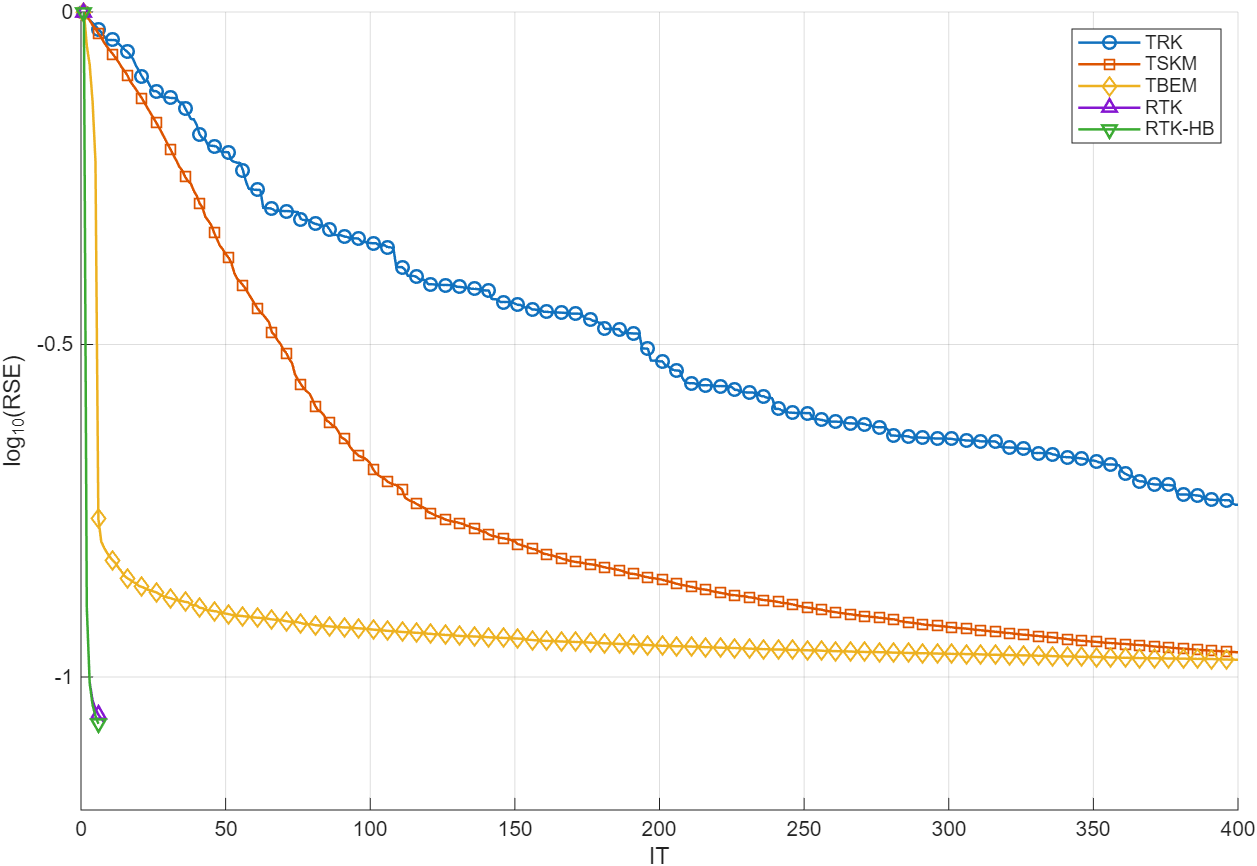}
\hspace{0.5cm}
\includegraphics[width=0.45\textwidth]{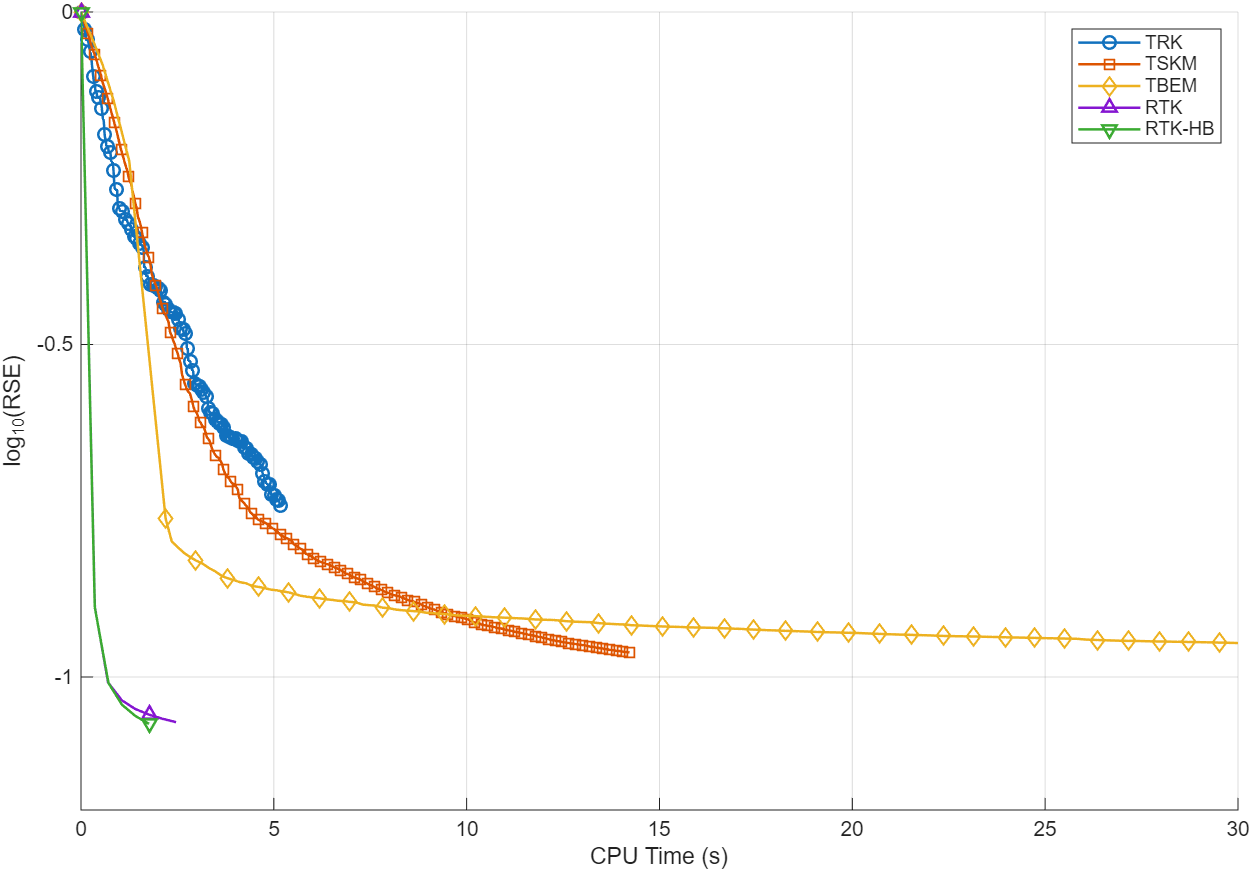}

% \vspace{0.5cm} 
\caption{Comparison of the relative error versus iterations and CPU time for five methods on the data of Image 1 ``House''.}
\label{fig:convergence3} 
\end{figure}

It is seen from Fig. \ref{fig:convergence3} that the two proposed methods show clear improvements over the other three methods. They both require fewer iterations and achieve a rapid decrease in the error within a very short CPU time.

\end{example}

\begin{example}In this experiment, the test tensor is generated from a hyperspectral image ``Urban''\cite{zou2025quaternion}. The original image $\mathcal{X}_{\text{ori}} \in \mathbb{R}^{307 \times 307 \times 162}$ is resized and reshaped into a tensor $\mathcal{X}^* \in \mathbb{R}^{308 \times 162 \times 308}$. The measurement tensor $\mathcal{A} \in \mathbb{R}^{308 \times 308 \times 308}$ and  $\mathcal{B}$ are established the same way as Example  \ref{sy2}. The maximum number of iterations is set to 250 and the iterations will stop when the relative error is less than 0.2.

The image recovery results selected from bands 110, 70 and 30 are presented in Fig. \ref{fig:gg}.

\begin{figure}[!htbp]
    \centering
    \includegraphics[width=0.75\textwidth]{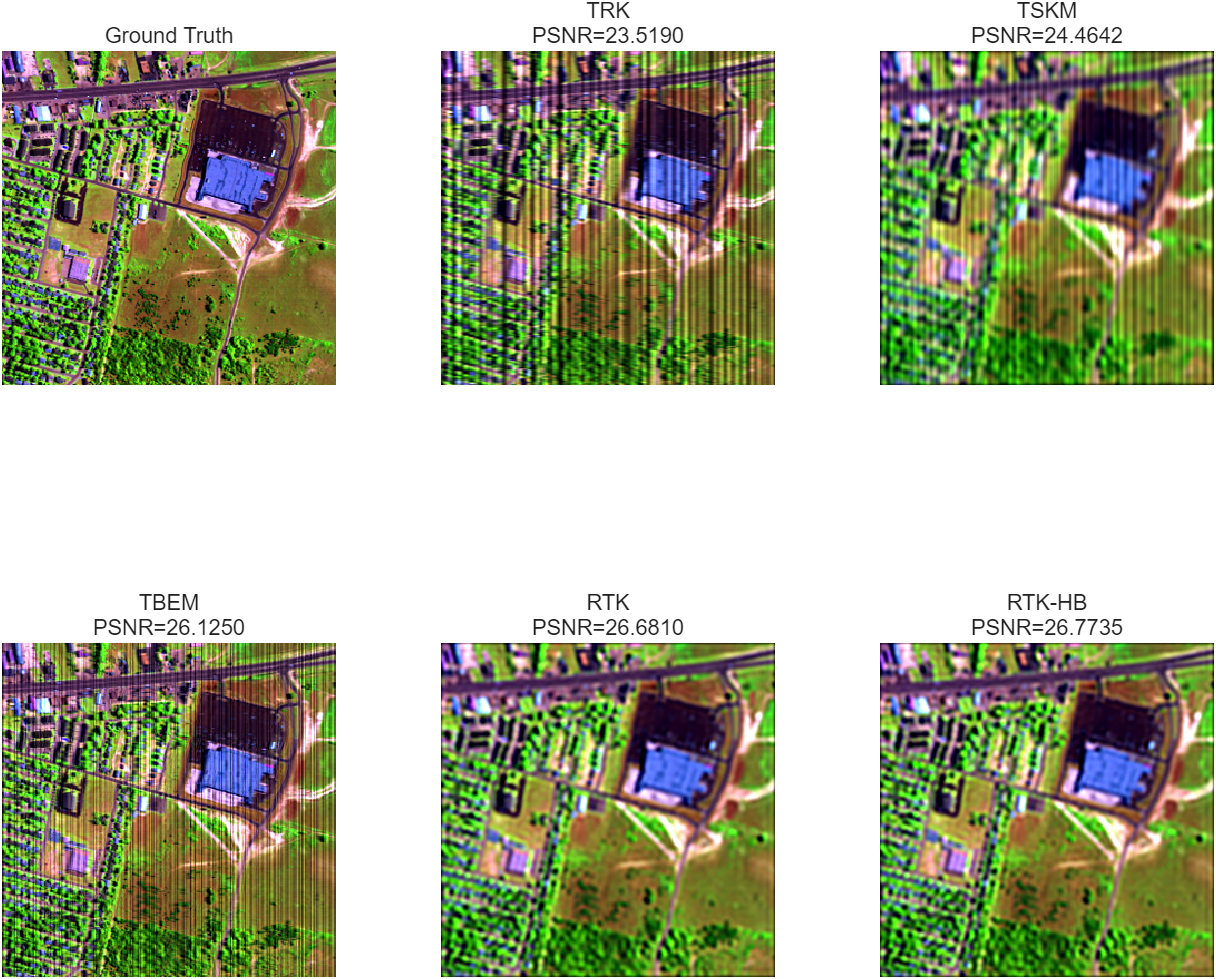}
    
    \caption{The original clean image sequence and the recovered results of five methods.}
    \label{fig:gg} 
\end{figure}

It is seen from Fig. \ref{fig:gg} that the residual-based tensor Kaczmarz method and the residual-based tensor Kaczmarz method with heavy ball momentum achieve superior reconstruction quality compared to the other three methods.

The curves of the relative error versus the number of iterations and CPU time are plotted in Fig. \ref{fig:convergence4} for five methods, respectively.

\begin{figure}[!htbp]
    \centering
    \includegraphics[width=0.45\textwidth]{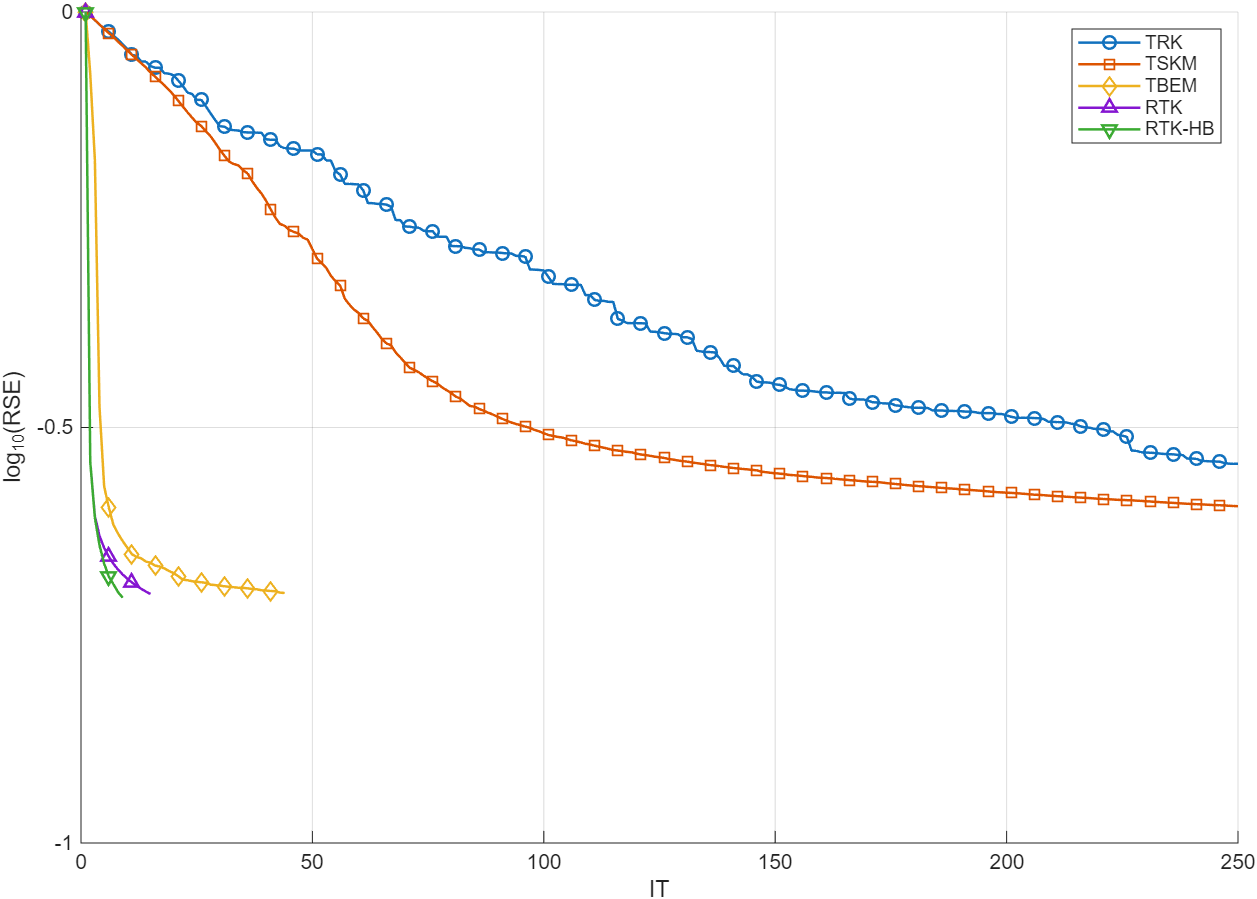}
    \hspace{0.5cm}
    \includegraphics[width=0.45\textwidth]{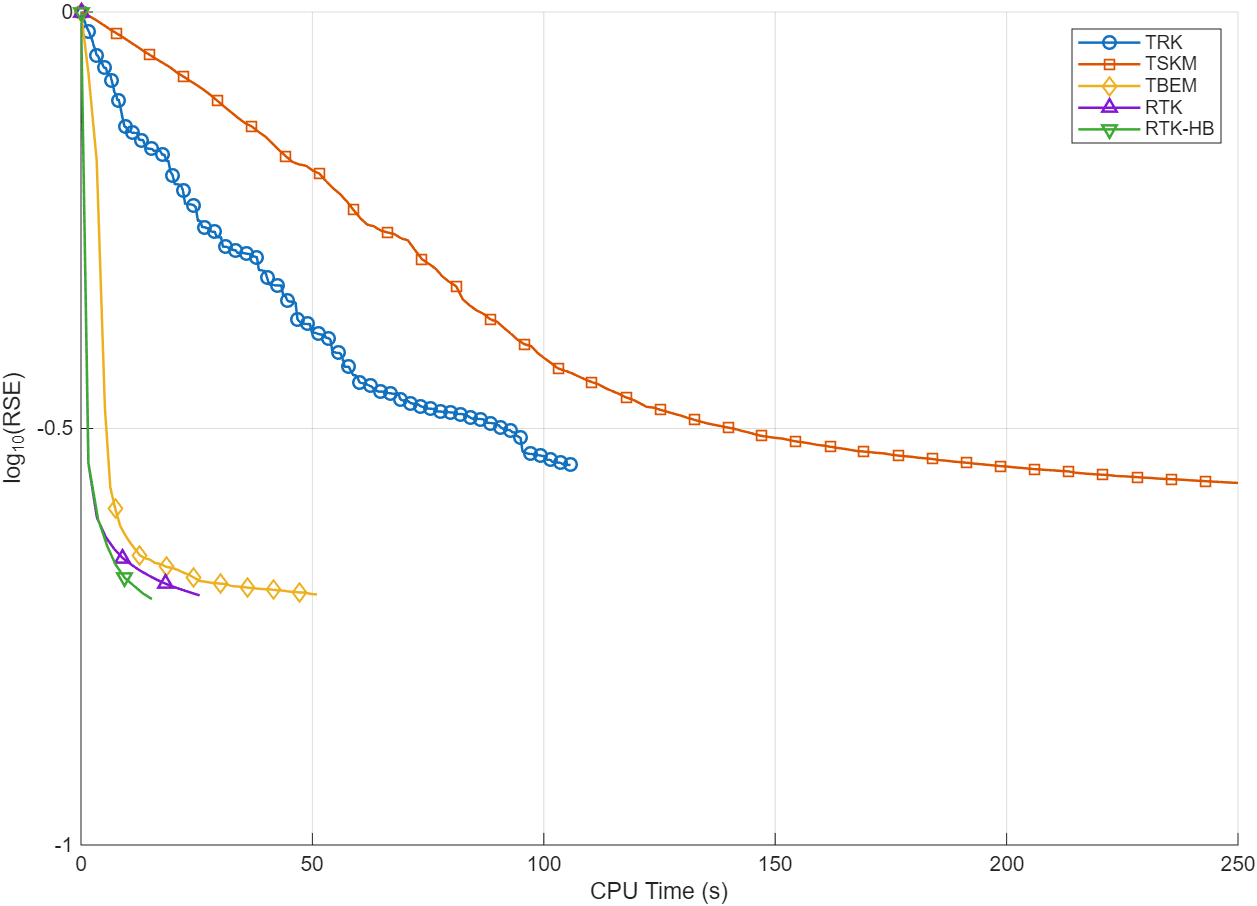}
   
    \caption{Comparison of the relative error versus iterations and CPU time for five methods on the hyperspectral image data.}
    \label{fig:convergence4}
\end{figure}

It is seen from Fig. \ref{fig:convergence4} that the proposed methods require fewer iterations and less computational time while achieving higher accuracy levels that the other methods fail to reach. 
%Among all the methods, the residual-based tensor Kaczmarz method with heavy ball momentum exhibits the best overall performance.
\end{example}

\section{Conclusion}   \label{sec5}

A residual-based Kaczmarz method and a heavy ball momentum-accelerated residual-based Kaczmarz method are proposed. Theoretical analysis established the convergence of the proposed methods and showed that they converge faster than several existing tensor Kaczmarz-type methods.
%The implementation of Fourier-transformed variant are given to
%further enhance the computational efficiency by FFT.
Numerical examples are given to further confirm the efficiency of our approaches
and demonstrate that the number of iterations and CPU time are significantly reduced compared to the existing tensor Kaczmarz method for large-scale tensor problems.

\vspace{1.5em} 

\noindent \textbf{Funding} This work was supported by National Natural Science Foundation of China (No. 12471357).

\vspace{1.5em} 

\noindent \textbf{Data Availability} The data that support the findings of this study are available upon reasonable request from the authors.

\section*{Declarations}
\noindent \textbf{Conflicts of Interest} The authors declare that they have no conflict of interest.

\bibliographystyle{plain}
\bibliography{main}

\end{document}